\newtheorem{thm}{Theorem}
\newtheorem{lem}{Lemma}
\newtheorem{cor}{Corollary}
\newtheorem{prop}{Proposition}
\newtheorem{rem}{Remark}
\title{Complex supermanifolds of low odd dimension\\ and the example of the complex projective line}
\author{ Matthias Kalus\\
\small Fakult\"at f\"ur Mathematik\\ 
\small Ruhr-Universit\"at Bochum,\\ 
\small Universit\"atsstra\ss e 150\\ 
\small D-44801 Bochum, Germany\\
\small Matthias.Kalus@rub.de}
\date{}
\begin{document}
\maketitle
\begin{abstract} \noindent
Complex supermanifold structures being deformations of the exterior algebra of a holomorphic vector bundle, have been parametrized by orbits of a group on non-abelian cohomology (see \cite{Gr}). For the case of odd dimension $4$ and $5$ an identification of these cohomologies with a subset of abelian cohomologies being computable with less effort, is provided in this article. Furthermore for a rank $\leq 3$ sub vector bundle $F\to M$ of a holomorphic vector bundle $E=F\oplus F^\prime\to M$, a reduction of a (possibly non-split) supermanifold structure associated with $\Lambda E$ to a structure associated with $\Lambda F$ is defined. In the case of $rk(F^\prime)\leq 2$ with no global derivations increasing the $\mathbb Z$-degree by $2$, the complete cohomological information of a supermanifold structure associated with $E$ is given in terms of cohomologies compatible with the decomposition of $E$. Details on supermanifold structures of odd dimension 3 and 4 associated with sums of line bundles of sufficient 
negativity on 
$\mathbb P^1(\mathbb C)$ are deduced. 
\end{abstract}

\noindent
Complex non-split supermanifolds arise as deformations of a split complex supermanifold $(M,\mathcal O_{\Lambda E})$ constructed from a complex vector bundle $E\to M$. They are parametrized by orbits of the  group of bundle automorphism $H^0(M,Aut(E))$  on a certain in general non-abelian cohomology $H^1(M,G_E)$ (see \cite{Gr}). The cochains of this cohomology can be expressed as the exponential of elements in a certain abelian cochain complex $C^1(M,Der^{(2)}(\Lambda E))$ (see \cite{Ro}). However the degree of the involved finite exponential series is $k$ for $E$ of rank $2k$ or $2k+1$, increasing the complexity of computations for every second step in odd dimension. In particular $H^1(M,G_E)$ is zero up to odd dimension $1$, abelian up to odd dimension $3$ and in general non-abelian beyond this limit. 

\bigskip\noindent 
A method for relating supermanifolds of higher odd dimension to abelian cohomologies can hence considerably simplify computations. We restrict our view to the case of no global second degree derivations, i.e. $H^0(M,Der_2(\Lambda E))=0$. The main object of interest in this article is relating the lowest dimensional non-abelian cases of odd dimension $4$ and $5$ to the abelian case. For this the non-abelian cohomology classifying supermanifold structures of odd dimension $4$ or $5$ is embedded as a subset into the abelian cohomology $H^1(M,Der^{(2)}(\Lambda E))$ (second section).  This inclusion  depends on a fixed map $D$ relating cochains with values in a subsheaf $Der_2(\Lambda E)\subset Der^{(2)}(\Lambda E)$ that are appropriate for building supermanifolds, to cochains with values in a transversal complement $Der_4(\Lambda E)\subset Der^{(2)}(\Lambda E)$. However the image of the inclusion and the $H^0(M,Aut(E))$-action on it do not depend on the choice of $D$. A different way of relating elements in $H^1(
M,G_E)$ 
to cochains in an abelian complex was established in \cite{On-C} using a smooth Hermitian metric on $E$ and Hodge 
theory. 

\bigskip\noindent 
Furthermore it is proved that a reduction of the odd dimension is in general well defined for any subbundle $F \subset E$ of rank  $\leq 3$ with complement, i.e. $E=F\oplus F^\prime$. For the case $rk(F^\prime)\leq 2$ the cohomological data for a classification of supermanifold structures is given in terms of a sum of abelian cohomologies defined compatibly with the decomposition of $E$ (third section). This decomposition of cohomologies is of good use for the analysis of the $H^0(M,Aut(E))$-orbit structure on $H^1(M,G_E)$ since it is preserved by $H^0(M,Aut(F)\times Aut(F^\prime))$.

\bigskip\noindent 
Details on the orbit structure on $H^1(M,G_E)$ with respect to a maximal compact subgroup of $H^0(M,Aut(E))$ are deduced for rank 3 and 4 vector bundles being sums of line bundles of sufficient negativity on $\mathbb P^1(\mathbb C)$ (fourth section). Parameter spaces for special examples of supermanifold structures of odd dimension $3$ and $4$ on $\mathbb P^1(\mathbb C)$ were discussed and classified before in \cite{Ba1},\cite{Ba2},\cite{BO2}, \cite{BO}, \cite{Vi2}, and \cite{Vi1}. The case of odd dimension $2$ can be found in \cite{Vi1}. 

\section{Non-Split supermanifold structures}
The first section contains an introduction to the topic of complex non-split supermanifolds fixing the notation. Details can be found e.g. in \cite{Gr} and \cite{Ro}. 

\bigskip\noindent 
Let $\mathcal M=(M,\mathcal O_\mathcal M)$ be a complex supermanifold with underlying complex manifold $M$, sheaf of superfunctions $\mathcal O_\mathcal M$ and projection onto numerical holomorphic functions $pr:\mathcal O_\mathcal M \to \mathcal O_M$. Setting $\mathcal O_\mathcal M^{nil}:=Ker(pr)$ the sheaf  $\mathcal O_\mathcal M^{nil} /(\mathcal O_\mathcal M^{nil})^2$ defines a holomorphic vector bundle $E$ on $M$. Denote its automorphisms by $Aut(E)$, its sheaf of sections by $\mathcal O_E$, its full exterior power by $\Lambda E$ and the sheaf of automorphisms of algebras on $\mathcal O_{\Lambda E}$ preserving the $\mathbb Z/2\mathbb Z$-grading (but not necessarily the $\mathcal O_M$-module structure) by $Aut({\Lambda E})$. The rank of $E$ is the odd dimension of $\mathcal M$. Following \cite{Gr} denote by  $G_E\subset Aut({\Lambda E})$ the subsheaf of groups given by elements $\varphi \in Aut({\Lambda E})$ satisfying 
\begin{align*}
 (\varphi-Id)({\mathcal O_{\Lambda^j E}})\subset \textstyle{\bigoplus}_{k\geq 1} \mathcal O_{\Lambda^{j+2k} E} \ \qquad \forall \ j\geq 0 \ .
\end{align*}
It is proved in \cite{Gr} that the isomorphy classes of complex supermanifolds associated with a given vector bundle $E \to M$ are in $1:1$ correspondence to the $H^0(M,Aut(E))$-orbits by conjugation on the \v{C}ech cohomology $H^1(M,G_E)$. Note that this cohomology is meant with respect to composition of maps with identity as neutral element. So $H^1(M,G_E)$ is nothing more but a pointed set. The orbit of the identity in $H^1(M,G_E)$ corresponds to the unique split supermanifold structure associated with $E \to M$ given by $\mathcal O_\mathcal M=\mathcal O_{\Lambda E}$.

\bigskip\noindent
Following \cite{Ro} let $Der^{(2)}(\Lambda E)$  denote the sheaf of even derivations on the sheaf of $\mathbb Z/2\mathbb Z$-graded algebras $\mathcal O_{\Lambda E}$ satisfying 
$$w(\mathcal O_{\Lambda^j E})\subset \textstyle{\bigoplus}_{k\geq 1} \mathcal O_{\Lambda^{j+2k} E} \ \qquad \forall \ j\geq 0 \ . $$  
It is shown in \cite{Ro} that the exponential map maps $Der^{(2)}(\Lambda E)$ isomorphically onto $G_E$. The sheaf $Der^{(2)}(\Lambda E)$ itself decomposes  
\begin{align}\label{Der}
 Der^{(2)}(\Lambda E) = \textstyle{\bigoplus}_{k=1}^\infty Der_{2k}(\Lambda E) \  ,
\end{align}
where $Der_{2k}(\Lambda E)$ is the sheaf of even derivations satisfying $w(\mathcal O_{\Lambda^j E})\subset  \mathcal O_{\Lambda E^{j+2k}}$ for all $j\geq 0$.  The appropriate cohomology on  $Der^{(2)}(\Lambda E)$  is the usual abelian \v{C}ech cohomology with respect to the $\mathcal O_M$-module structure. 
\begin{rem}\label{rem:01}
(1) In odd dimension $0$ and $1$ the sheaf $Der^{(2)}(\Lambda E)$ is trivial and hence there are only split supermanifold structures.\\ 
(2) In odd dimension $2$ and $3$ it is $Der^{(2)}(\Lambda E)=Der_2(\Lambda E)$. The exponential mapping is just adding the identity and the composition in $Der_2(\Lambda E)$ is zero. Hence the supermanifold structures on $M$ associated with $E$ correspond to the orbits of $H^0(M,Aut(E))$ by conjugation on $H^1(M,Der_2(\Lambda E))$.  \\
(3) In odd dimension $\geq 4$ the cohomologies $H^1(M,G_E)$ and $H^1(M,Der^{(2)}(\Lambda E))$ are in general not isomorphic any more.
\end{rem}
\noindent
In the following $\mathbf d$ denotes the coboundary operator of the non-abelian cochain complex of $G_E$, while $d$ denotes the respective operator for the abelian complex of $Der^{(2)}(\Lambda E)$.

\section{Non-abelian cohomology in odd dimension 4 and 5}\label{sec:2}
The non-abelian cohomology $H^1(M,G_E)$ for $E$ of rank $4$ or $5$, is identified with a subset of the abelian cohomology  $H^1(M,Der^{(2)}(\Lambda E))$.

\bigskip \noindent 
In this section fix $rk(E) \in \{4,5\}$. For a cocycle $\exp(u_2+u_4) \in Z^1(M,G_E)$ where $u_{2s} \in C^1(M,Der_{2s}(\Lambda E))$, it is by direct calculation necessary that  $u_2 \in Z^1(M,Der_2(\Lambda E))$. Furthermore  define
\begin{align*}
 c_{u_2}=pr_{End_4(\Lambda E)}(\mathbf{d} \exp(u_2)) \in C^2(M,End_4(\Lambda E))\ ,
\end{align*}
where the notion of $End^{(2)}(\Lambda E)=\bigoplus_{k=1}^\infty End_{2k}(\Lambda E)$ in the sheaf of complex linear endomorphisms of $\mathcal O_{\Lambda E}$ is  defined analogously to (\ref{Der}). From the cocycle condition on $\exp(u_2+u_4)$ it follows that $c_{u_2}=-du_{4}$ and hence $c_{u_2}$ is a coboundary of derivations. 
Denote: 
\begin{align*}
 &\tilde  Z^1(M,Der_2(\Lambda E)):=\{u_2 \in Z^1(M,Der_2(\Lambda E)) \ | \ c_{u_2} \in B^2(M,Der_4(\Lambda E)) \} 
\end{align*}
For later application note that for cochains $u_2+u_4\in C^1(M,Der^{(2)}(\Lambda E))$ and $v_2+v_4 \in C^0(M,Der^{(2)}(\Lambda E))$ it follows by direct calculation that:\footnote{Here  $\exp(v_2+v_4).\exp(u_2+u_4)$ denotes $\big(\exp(v_{2,i}+v_{4,i})\exp(u_{2,ij}+u_{4,ij})\exp(-v_{2,j}-v_{4,j})\big)_{ij}$.}
\begin{align}\label{x3}
 \exp(v_2+v_4).\exp(u_2+u_4)=\exp(u_2+dv_2+u_4+dv_4+F(u_2,v_2))\\ \quad \mbox{with } F(v_2,u_2):=\frac{1}{2}([v_{2,i}+v_{2,j},u_{2,ij}]-[v_{2,i},v_{2,j}])_{ij} \nonumber
\end{align}
A map $D:H^0(M,Aut(E))\times C^0(M,Der_2(\Lambda E))\times \tilde Z^1(M,Der_2(\Lambda E))\to C^1(M,Der_4(\Lambda E))$ is called compatible if it satisfies:   
\begin{align}\label{x1}
 &d(D(\varphi,v_2,u_2))=c_{u_2} \\ \label{x2}
 \mbox{and }\qquad &\varphi.D(\varphi,v_2,u_2)=D(Id,0,(\varphi.u_2)+d(\varphi.v_2))+F(\varphi.v_2,\varphi.u_2) 
\end{align}
for all  $\varphi \in H^0(M,Aut(E))$, $v_2 \in C^0(M,Der_2(\Lambda E))$ and $u_2\in\tilde Z^1(M,Der_2(\Lambda E))$. It is called strongly compatible if additionally $D$ satisfies
\begin{align*}
 \varphi.D(\varphi,v_2,u_2)=D(Id,0,\varphi.u_2)
\end{align*}
for all allowed $(\varphi,v_2,u_2)$.

\begin{lem}\label{lemlem}
 A compatible map $D$ always exists. If $H^0(M, Der_2(\Lambda E))=0$, $D$ can be chosen to be strongly compatible.
\end{lem}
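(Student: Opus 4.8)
The plan is to prove both statements by \emph{constructing} $D$ explicitly. The key point is that (\ref{x2}), after cancelling the invertible $\varphi$, writes the value of any compatible $D$ in terms of the single map $D_0:=D(\mathrm{Id},0,\cdot)\colon\tilde Z^1(M,Der_2(\Lambda E))\to C^1(M,Der_4(\Lambda E))$ via
\[
 D(\varphi,v_2,u_2)=\varphi^{-1}.\bigl(D_0((\varphi.u_2)+d(\varphi.v_2))+F(\varphi.v_2,\varphi.u_2)\bigr).
\]
So it suffices to pick $D_0$ well and then \emph{define} $D$ by this rule. First I would record, by direct computation from the definition of $c_{u_2}$ and from (\ref{x3}) with $u_4=v_4=0$, the two transformation rules $c_{\varphi.u_2}=\varphi.c_{u_2}$ and $c_{u_2+dv_2}=c_{u_2}-d\,F(v_2,u_2)$. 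Since $\varphi$ preserves coboundaries and $F(v_2,u_2)\in C^1(M,Der_4(\Lambda E))$, these already show that $\tilde Z^1(M,Der_2(\Lambda E))$ is stable under $u_2\mapsto(\varphi.u_2)+d(\varphi.v_2)$, so the rule above is well posed; and as $F(0,u_2)=0$, putting $\varphi=\mathrm{Id}$, $v_2=0$ returns $D(\mathrm{Id},0,\cdot)=D_0$, as consistency demands.

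For the first assertion, for each $u_2\in\tilde Z^1(M,Der_2(\Lambda E))$ I would pick any $D_0(u_2)\in C^1(M,Der_4(\Lambda E))$ with $d\,D_0(u_2)=c_{u_2}$; such a primitive exists precisely because $u_2\in\tilde Z^1$ means $c_{u_2}\in B^2(M,Der_4(\Lambda E))$. With $D$ built from this $D_0$, (\ref{x2}) holds by construction, and (\ref{x1}) follows by applying $d$ and invoking the transformation rules: $d\,D(\varphi,v_2,u_2)=\varphi^{-1}.\bigl(c_{(\varphi.u_2)+d(\varphi.v_2)}+d\,F(\varphi.v_2,\varphi.u_2)\bigr)=\varphi^{-1}.(\varphi.c_{u_2})=c_{u_2}$. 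Hence $D$ is compatible.

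For the second assertion, setting $\varphi=\mathrm{Id}$ in the extra axiom and in (\ref{x2}) and comparing shows that strong compatibility is equivalent to the additional requirement
\[
 D_0(u_2+dv_2)=D_0(u_2)-F(v_2,u_2)\qquad\text{for all }u_2\in\tilde Z^1(M,Der_2(\Lambda E)),\ v_2\in C^0(M,Der_2(\Lambda E)).
\]
The transformation rule for $c$ with $u_2=0$ gives $B^1(M,Der_2(\Lambda E))\subseteq\tilde Z^1(M,Der_2(\Lambda E))$ and, more generally, that $\tilde Z^1(M,Der_2(\Lambda E))$ is a union of cosets of $B^1(M,Der_2(\Lambda E))$. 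I would choose in each coset a representative $r_\alpha$, a primitive $P_\alpha$ of $c_{r_\alpha}$, and put $D_0(r_\alpha+dv_2):=P_\alpha-F(v_2,r_\alpha)$. This is where $H^0(M,Der_2(\Lambda E))=0$ is used: it makes $v_2\mapsto r_\alpha+dv_2$ injective, hence a bijection of $C^0(M,Der_2(\Lambda E))$ onto the coset, so that $D_0$ is unambiguous. One then checks $d\,D_0(u_2)=c_{u_2}$ throughout the coset (from the transformation rule) and the displayed identity based at an arbitrary point of the coset; the latter reduces to the cocycle-type identity relating $F(v_2+\bar v_2,u_2)$, $F(v_2,u_2)$ and $F(\bar v_2,u_2+dv_2)$ that comes out of the associativity of the conjugation action behind (\ref{x3}). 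Feeding this $D_0$ into the rule of the first paragraph gives a compatible $D$ that moreover satisfies the extra axiom, i.e.\ a strongly compatible one.

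The one genuinely delicate point is this last consistency check: that the twisting term $-F(v_2,r_\alpha)$ propagates the single choice $D_0(r_\alpha)=P_\alpha$ coherently over all of $r_\alpha+B^1(M,Der_2(\Lambda E))$ --- equivalently, that $F$ behaves cocycle-like along such cosets. It is here that $H^0(M,Der_2(\Lambda E))=0$ cannot be dropped: without it, two decompositions $u_2=r_\alpha+dv_2=r_\alpha+dv_2'$ with $v_2\neq v_2'$ would in general force $F(v_2,r_\alpha)\neq F(v_2',r_\alpha)$, so that no strongly compatible $D$ could exist. Everything else is bookkeeping with the finite exponential series (\ref{x3}) and the definitions of $c$ and $F$.
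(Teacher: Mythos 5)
Your construction of a compatible $D$ is essentially the paper's own argument: choose any $D_0=D(\mathrm{Id},0,\cdot)$ with $d\,D_0(u_2)=c_{u_2}$ (possible by the definition of $\tilde Z^1$), extend it to all triples via (\ref{x2}), and verify (\ref{x1}) from (\ref{eq-25}) together with the $H^0(M,Aut(E))$-equivariance of $u_2\mapsto c_{u_2}$, $d$ and $F$. That half is correct and matches the paper.

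The gap sits exactly at the point you flag as delicate, and it is genuine: the ``cocycle-type identity'' for $F$ that you invoke is false as an exact identity. Composing the conjugations by $\exp(\bar v_2)$ and $\exp(v_2)$ gives conjugation by $\exp(v_2)\exp(\bar v_2)=\exp\bigl(v_2+\bar v_2+\tfrac12[v_2,\bar v_2]\bigr)$, and the extra term $\tfrac12[v_2,\bar v_2]\in C^0(M,Der_4(\Lambda E))$ enters (\ref{x3}) through its coboundary; comparing the two ways of computing the action one finds
\begin{align*}
F(v_2+\bar v_2,u_2)-F(v_2,u_2)-F(\bar v_2,u_2+dv_2)=\tfrac12\, d\bigl(([v_{2,i},\bar v_{2,i}])_i\bigr),
\end{align*}
as one also checks directly from the explicit formula for $F$. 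This correction lies in $B^1(M,Der_4(\Lambda E))$ but is in general nonzero (take $\bar v_2$ nonzero on a single chart with $[v_{2,i},\bar v_{2,i}]\neq 0$ there). Hence your prescription $D_0(r_\alpha+dv_2):=P_\alpha-F(v_2,r_\alpha)$ is indeed unambiguous (by $H^0(M,Der_2(\Lambda E))=0$) and satisfies $d\,D_0=c$ along the whole coset by (\ref{eq-25}), but it satisfies $D_0(u_2+d\bar v_2)=D_0(u_2)-F(\bar v_2,u_2)$ only when the base point $u_2$ is the chosen representative $r_\alpha$; at a general point of the coset the relation fails by the coboundary above, so the resulting $D$ does not fulfil (\ref{rep}) for all allowed triples, i.e.\ it is not strongly compatible in the stated sense --- the consistency you expected from associativity is precisely destroyed by the Baker--Campbell--Hausdorff term. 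For comparison, the paper's proof does not perform this propagation check at all: it imposes (\ref{rep}) outright and uses $H^0(M,Der_2(\Lambda E))=0$ only to handle the degenerate case $d(\varphi.v_2)=0$; your explicit construction makes the hidden obstruction visible. What your $D_0$ does provide is the strong-compatibility relation up to elements of $B^1(M,Der_4(\Lambda E))$, which is all that is used afterwards (well-definedness of $\sigma_D$ in Lemma \ref{lem4} and condition (\ref{eq:e}) only involve classes in $H^1(M,Der_4(\Lambda E))$), but as a proof of the Lemma as literally stated the step does not go through.
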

\begin{proof}
Let $D(Id,0,\ \cdot\ ):\tilde Z^1(M,Der_2(\Lambda E))\to C^1(M,Der_4(\Lambda E))$ be any map satisfying (\ref{x1}) for the third argument. This exists due to the definition of $\tilde Z^1(M,Der_2(\Lambda E))$. Continue it via (\ref{x2}) to $H^0(M,Aut(E))\times C^0(M,Der_2(\Lambda E))\times \tilde Z^1(M,Der_2(\Lambda E))$. From (\ref{x3}) setting $u_4=v_4=0$ it follows by direct calculation that
\begin{align}\label{eq-25}
 c_{u_2}=c_{u_2+dv_2}+dF(v_2,u_2)
\end{align}
for all $v_2 \in  C^0(M,Der_2(\Lambda E))$ and $u_2\in\tilde Z^1(M,Der_2(\Lambda E))$. Using this and (\ref{x2}), (\ref{x1}) holds for $\varphi=Id$. Since $u_2\mapsto c_{u_2}$, $d$ and $F$ are $H^0(M,Aut(E))$-equivariant, (\ref{x1}) holds for the continued map $D$. 

\smallskip\noindent
In the case $H^0(M, Der_2(\Lambda E))=0$ we choose $D(Id,0,\cdot)$ as above but require
\begin{align}\label{rep}
 D(Id,0,(\varphi.u_2)+d(\varphi.v_2))=D(Id,0,\varphi.u_2)-F(\varphi.v_2,\varphi.u_2)
\end{align}
for all allowed $(\varphi,v_2,u_2)$. This can be satisfied due to (\ref{eq-25}) and since $d(\varphi.v_2)=0$ implies $\varphi.v_2=0$ and so $F(\varphi,v_2,\varphi.u_2)=0$. Now proceed as above to obtain $D(\varphi,v_2,u_2)$. We find with (\ref{x2}) that the additional requirement yields strong compatibility of $D$.
\end{proof}

\noindent
In the following let $H^0(M, Der_2(\Lambda E))=0$ and $D$ will be chosen to be strongly compatible.

\begin{lem}\label{lem4}
 A strongly compatible $D$ induces a well-defined map 
 \begin{align*}
  \sigma_D:H^1(M,G_E)\to H^1(M,Der_2(\Lambda E)) \oplus H^1 (M,Der_4(\Lambda E))\
 \end{align*}
 given by $\sigma_D([\exp(u_2+u_4)])=([u_2],[D(Id,0,u_2)+u_4])$.   If $D$ additionally satisfies 
\begin{align}\label{eq:e}
 \varphi.D(Id,0,u_2)-D(Id,0,\varphi.u_2)\in B^1(M,Der_4(\Lambda E))
\end{align}
for all allowed $(\varphi,u_2)$ then the map $\sigma_D$ is $H^0(M,Aut(E))$-equivariant. 
\end{lem}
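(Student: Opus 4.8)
The plan is to prove the two assertions in turn. \textbf{Well-definedness.} Since $rk(E)\in\{4,5\}$ we have $Der^{(2)}(\Lambda E)=Der_2(\Lambda E)\oplus Der_4(\Lambda E)$, so via the isomorphism $\exp:Der^{(2)}(\Lambda E)\to G_E$ every class in $H^1(M,G_E)$ is represented by a cocycle $\exp(u_2+u_4)$ with $u_{2s}\in C^1(M,Der_{2s}(\Lambda E))$; as recalled just before Lemma \ref{lemlem} the cocycle condition forces $u_2\in Z^1(M,Der_2(\Lambda E))$ and $c_{u_2}=-du_4$, whence $c_{u_2}\in B^2(M,Der_4(\Lambda E))$, so $u_2\in\tilde Z^1(M,Der_2(\Lambda E))$ and $D(Id,0,u_2)$ is defined. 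I would then check that the pair lands in the stated target: $u_2$ is a $Der_2$-cocycle by the above, and by (\ref{x1}) with $\varphi=Id$, $v_2=0$ together with $c_{u_2}=-du_4$ we get $d\bigl(D(Id,0,u_2)+u_4\bigr)=c_{u_2}+du_4=0$, so $D(Id,0,u_2)+u_4\in Z^1(M,Der_4(\Lambda E))$.

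The heart of well-definedness is independence of the representative. Two cocycles are cohomologous in $Z^1(M,G_E)$ exactly when $\exp(u_2'+u_4')=\exp(v_2+v_4).\exp(u_2+u_4)$ for some $\exp(v_2+v_4)\in C^0(M,G_E)$; by (\ref{x3}) this gives $u_2'=u_2+dv_2$ and $u_4'=u_4+dv_4+F(v_2,u_2)$, so at once $[u_2']=[u_2]$. For the second component I would invoke (\ref{rep}) at $\varphi=Id$ --- equivalently, strong compatibility combined with (\ref{x2}) at $\varphi=Id$ --- which reads $D(Id,0,u_2+dv_2)=D(Id,0,u_2)-F(v_2,u_2)$; substituting,
\begin{align*}
 D(Id,0,u_2')+u_4'&=D(Id,0,u_2)-F(v_2,u_2)+u_4+dv_4+F(v_2,u_2)\\
 &=\bigl(D(Id,0,u_2)+u_4\bigr)+dv_4,
\end{align*}
so $[D(Id,0,u_2')+u_4']=[D(Id,0,u_2)+u_4]$ in $H^1(M,Der_4(\Lambda E))$. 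I expect the cancellation of the two $F$-contributions here --- the one produced by the change of cocycle in (\ref{x3}), the other by the change of $D(Id,0,\cdot)$ under adding a $Der_2$-coboundary to its argument --- to be the step that must be lined up most carefully; it is exactly the property that strong compatibility was designed to supply.

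\textbf{Equivariance.} The $H^0(M,Aut(E))$-action on $H^1(M,G_E)$ is conjugation, so $\varphi.[\exp(u_2+u_4)]=[\exp(\varphi.u_2+\varphi.u_4)]$, where $\varphi$ preserves the decomposition $Der_2(\Lambda E)\oplus Der_4(\Lambda E)$ and commutes with $d$ and with $u_2\mapsto c_{u_2}$ (as used in the proof of Lemma \ref{lemlem}); in particular $\varphi.u_2\in\tilde Z^1(M,Der_2(\Lambda E))$, so the right-hand side has a legitimate $\sigma_D$-image. Then
\begin{align*}
 \sigma_D\bigl(\varphi.[\exp(u_2+u_4)]\bigr)&=\bigl([\varphi.u_2],\,[D(Id,0,\varphi.u_2)+\varphi.u_4]\bigr),\\
 \varphi.\sigma_D\bigl([\exp(u_2+u_4)]\bigr)&=\bigl([\varphi.u_2],\,[\varphi.D(Id,0,u_2)+\varphi.u_4]\bigr),
\end{align*}
whose first components agree automatically and whose second components agree exactly when $\varphi.D(Id,0,u_2)-D(Id,0,\varphi.u_2)\in B^1(M,Der_4(\Lambda E))$, which is hypothesis (\ref{eq:e}). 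This finishes the equivariance, and hence the lemma.
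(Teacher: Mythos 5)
Your proof is correct and follows essentially the same route as the paper: both arguments rest on the transformation formula (\ref{x3}) together with (\ref{x2}) and strong compatibility (which you correctly repackage as the identity (\ref{rep}) at $\varphi=Id$), the only difference being that the paper treats the change of representative and the $\varphi$-action in one combined computation while you separate well-definedness from equivariance. The cancellation of the two $F$-terms that you single out is indeed exactly the point where strong compatibility enters in the paper's proof as well.
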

\begin{proof}
Note that if $\exp(u_2+u_4)\in Z^1(M,G_E)$ then $D(\varphi,v_2,u_2)+u_4 \in Z^1 (M,Der_4(\Lambda E))$. Further it is with (\ref{x3}) and $H^0(M,Aut(E))$-equivariance of $\exp$, $d$ and $F$:
\begin{align*}
 &\sigma_D([\varphi.(\exp(v_2+v_4).\exp(u_2+u_4))])\\& \qquad =\big([(\varphi.u_2)+d(\varphi.v_2)],[D(Id,0,(\varphi.u_2)+d(\varphi.v_2))+(\varphi.u_4)+d(\varphi.v_4)+F(\varphi.v_2,\varphi.u_2)]\big)
\end{align*}
The representing elements differ from those of $\varphi.\sigma_D(\exp(u_2+u_4))$ via equation (\ref{x2})  only by $(d(\varphi. v_2),D(Id,0,\varphi.u_2)-\varphi.D(Id,0,u_2)+d(\varphi.v_4))$. So $\sigma_D$ is well-defined for $\varphi=Id$ and under the additional requirement on $D$ also $H^0(M,Aut(E))$-equivariant.
\end{proof}

\noindent
Note that $\sigma_D$ is a bijection onto $\left(\tilde Z^1(M,Der_2(\Lambda E))/B^1(M,Der_2(\Lambda E))\right) \oplus H^1 (M,Der_4(\Lambda E))$, the first summand being well-defined by (\ref{eq-25}). So from Lemmas \ref{lemlem} and \ref{lem4} we can conclude: 
\begin{prop}\label{prop:02}
In the case $H^0(M, Der_2(\Lambda E))=0$, there always exists a bijection:  $$ \sigma_D:H^1(M,G_E)\to \left(\tilde Z^1(M,Der_2(\Lambda E))/B^1(M,Der_2(\Lambda E))\right) \oplus H^1 (M,Der_4(\Lambda E))$$ 
If a strongly compatible $D$ satisfying (\ref{eq:e}) exists then $\sigma_D$ is $H^0(M,Aut(E))$-equivariant.
\end{prop}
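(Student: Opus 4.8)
The plan is to combine Lemmas~\ref{lemlem} and~\ref{lem4} with the bijectivity observation stated in the remark preceding Proposition~\ref{prop:02}. Since $H^0(M,Der_2(\Lambda E))=0$, Lemma~\ref{lemlem} guarantees a strongly compatible $D$, and Lemma~\ref{lem4} then produces the map $\sigma_D$ sending $[\exp(u_2+u_4)]$ to $([u_2],[D(Id,0,u_2)+u_4])$, which is well-defined as a map into $H^1(M,Der_2(\Lambda E))\oplus H^1(M,Der_4(\Lambda E))$. First I would observe that the image actually lands in the smaller set $\bigl(\tilde Z^1(M,Der_2(\Lambda E))/B^1(M,Der_2(\Lambda E))\bigr)\oplus H^1(M,Der_4(\Lambda E))$: the first component is in $\tilde Z^1$ by the very definition of that set (any cocycle $\exp(u_2+u_4)\in Z^1(M,G_E)$ forces $u_2\in\tilde Z^1$, as noted at the start of Section~\ref{sec:2}), and this quotient is a well-defined subset of $H^1(M,Der_2(\Lambda E))$ precisely by equation~(\ref{eq-25}), which shows that adding a coboundary $dv_2$ to $u_2$ keeps $c_{u_2}$ inside $B^2(M,Der_4(\Lambda E))$.

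Next I would establish surjectivity. Given a pair $([u_2],[w_4])$ with $u_2\in\tilde Z^1(M,Der_2(\Lambda E))$ and $w_4\in Z^1(M,Der_4(\Lambda E))$, I set $u_4:=w_4-D(Id,0,u_2)$ and check that $\exp(u_2+u_4)$ is a cocycle in $Z^1(M,G_E)$: by~(\ref{x1}) with $\varphi=Id$ one has $d(D(Id,0,u_2))=c_{u_2}$, which is exactly the condition making $\exp(u_2+u_4)$ satisfy the non-abelian cocycle identity (this is the content of the first sentence of the proof of Lemma~\ref{lem4}, read in reverse). Then $\sigma_D([\exp(u_2+u_4)])=([u_2],[D(Id,0,u_2)+u_4])=([u_2],[w_4])$. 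For injectivity I would suppose $\sigma_D$ identifies two classes: the first components agreeing up to $B^1(M,Der_2(\Lambda E))$ means the two $u_2$'s differ by some $dv_2$, and then~(\ref{x3}) shows the two cocycles are already related by the action of $\exp(v_2+v_4)$ up to adjusting $v_4$; matching the second components (which agree in $H^1(M,Der_4(\Lambda E))$, i.e. up to a coboundary $dv_4$) fixes the remaining freedom, so the two classes in $H^1(M,G_E)$ coincide. The key computational input throughout is~(\ref{x3}), which linearises the conjugation action into an additive shift by $d(\cdot)$ plus the correction term $F$.

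For the equivariance statement, I would invoke Lemma~\ref{lem4} directly: if the strongly compatible $D$ also satisfies~(\ref{eq:e}), then $\sigma_D$ is $H^0(M,Aut(E))$-equivariant as a map into $H^1(M,Der_2(\Lambda E))\oplus H^1(M,Der_4(\Lambda E))$, and since the target subset $\bigl(\tilde Z^1/B^1\bigr)\oplus H^1(M,Der_4(\Lambda E))$ is itself $H^0(M,Aut(E))$-stable — the map $u_2\mapsto c_{u_2}$ being equivariant, so that $\varphi$ preserves $\tilde Z^1$ — the equivariance descends to the claimed bijection without change.

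The main obstacle I anticipate is not any single step but the bookkeeping in the injectivity argument: one must carefully track how the two pieces of freedom in $C^0(M,Der^{(2)}(\Lambda E))$, namely $v_2$ and $v_4$, are consumed — $v_2$ to match the first components and $v_4$ to absorb the residual coboundary in the second — while using~(\ref{x2})/(\ref{rep}) to control the interaction term $F(\varphi.v_2,\varphi.u_2)$ and $D(Id,0,(\varphi.u_2)+d(\varphi.v_2))$. Once one trusts that~(\ref{x3}) plus strong compatibility makes these corrections cancel exactly, the rest is formal. Everything else reduces to re-reading the two preceding lemmas and the intervening remark, so the proof is essentially a one-line assembly: ``By Lemma~\ref{lemlem} a strongly compatible $D$ exists; by Lemma~\ref{lem4} and the bijectivity remark $\sigma_D$ is the asserted bijection, equivariant under~(\ref{eq:e}).''
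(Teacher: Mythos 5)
Your proposal is correct and follows essentially the same route as the paper, which proves Proposition~\ref{prop:02} simply by combining Lemma~\ref{lemlem}, Lemma~\ref{lem4} and the bijectivity remark preceding the statement. Your surjectivity and injectivity sketches (via $u_4:=w_4-D(Id,0,u_2)$, equation~(\ref{x3}) and the cancellation coming from strong compatibility, i.e.~(\ref{rep}) at $\varphi=Id$) merely spell out the details the paper leaves implicit in that remark.
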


\section{Cohomology for decomposable vector bundles}

Assume in this section that $F \subset E$ is a complex sub vector bundle of rank $\leq3$ with $E=F\oplus F^\prime$ as vector bundles. In the first part of this section $E$ may have any rank $\geq rk{(F)}$ and the  projection morphism  $pr_F:E\to F$ is extended to $\Lambda pr_F:\mathcal O_{\Lambda E} \to \mathcal O_{\Lambda F}$. The goal is a restriction of a supermanifold structure on $E$ to a supermanifold structure on $F$, and secondly expressing the cohomological data in the case $rk(F^\prime)\leq 2$ in terms of abelian cohomologies compatible with the decomposition.

\bigskip\noindent
Let $[\alpha] \in H^1(M,G_E)$ be a cohomology class represented by $\alpha\in Z^1(M,G_E)$. Denoting by $End(\Lambda F)$ the sheaf of endomorphisms of the sheaf of complex vector spaces $\mathcal O_{\Lambda F}$ define  $$\alpha_F:=\Lambda pr_F \circ \alpha|_{\Lambda F} \in C^1(M,End({\Lambda F})) \ .$$ This cochain induces a supermanifold structure associated with  $F$:
\begin{lem}\label{lem:123}
 The cochain $\alpha_F$ lies in $Z^1(M,G_F)$  and the map 
 $$H^1(M,G_E)\longrightarrow H^1(M,G_F), \qquad [\alpha] \longmapsto [\alpha_F]  $$
 is well-defined.
\end{lem}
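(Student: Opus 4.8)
The plan is to verify two things: first that $\alpha_F$ lands in the subsheaf $G_F$ and satisfies the cocycle condition, and second that replacing $\alpha$ by a cohomologous cocycle changes $\alpha_F$ only by a coboundary in $G_F$. For the first part I would start from the observation that $\Lambda pr_F\colon \mathcal O_{\Lambda E}\to\mathcal O_{\Lambda F}$ is a morphism of $\mathbb Z/2\mathbb Z$-graded algebras and that the inclusion $\mathcal O_{\Lambda F}\hookrightarrow\mathcal O_{\Lambda E}$ splits it, i.e. $\Lambda pr_F$ restricted to $\mathcal O_{\Lambda F}$ is the identity. Since each $\alpha_{ij}\in G_E$ acts as an algebra automorphism of $\mathcal O_{\Lambda E}$ preserving the parity, the composite $\alpha_{F,ij}=\Lambda pr_F\circ\alpha_{ij}|_{\mathcal O_{\Lambda F}}$ is a parity-preserving algebra \emph{endomorphism} of $\mathcal O_{\Lambda F}$; the defining filtration-shift property $(\alpha_{ij}-\mathrm{Id})(\mathcal O_{\Lambda^j E})\subset\bigoplus_{k\geq1}\mathcal O_{\Lambda^{j+2k}E}$ passes through $\Lambda pr_F$ (which respects the $\mathbb Z$-grading) to give $(\alpha_{F,ij}-\mathrm{Id})(\mathcal O_{\Lambda^j F})\subset\bigoplus_{k\geq1}\mathcal O_{\Lambda^{j+2k}F}$. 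In particular $\alpha_{F,ij}-\mathrm{Id}$ is nilpotent, so $\alpha_{F,ij}$ is invertible and hence lies in $G_F$.

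Next I would check the cocycle identity $\alpha_{F,ij}\,\alpha_{F,jk}=\alpha_{F,ik}$. Here the key point is that for $\varphi,\psi\in G_E$ one has $\Lambda pr_F\circ(\varphi\circ\psi)|_{\mathcal O_{\Lambda F}}=\bigl(\Lambda pr_F\circ\varphi|_{\mathcal O_{\Lambda F}}\bigr)\circ\bigl(\Lambda pr_F\circ\psi|_{\mathcal O_{\Lambda F}}\bigr)$. This is \emph{not} automatic from $\Lambda pr_F$ being an algebra map; it relies on the fact that $\psi$ only \emph{raises} $\mathbb Z$-degree and that $\ker(\Lambda pr_F)$ is the ideal generated by $\mathcal O_{F'}$, which is $\varphi$-stable modulo higher degree in a way compatible with the filtration. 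Concretely, writing $\iota\colon\mathcal O_{\Lambda F}\hookrightarrow\mathcal O_{\Lambda E}$, one needs $\Lambda pr_F\circ\varphi\circ(\mathrm{Id}-\iota\circ\Lambda pr_F)\circ\psi\circ\iota=0$; since $(\mathrm{Id}-\iota\circ\Lambda pr_F)$ projects onto the $\ker(\Lambda pr_F)$-part and $\varphi$ preserves this ideal, its image stays in $\ker(\Lambda pr_F)$ and is killed by the outer $\Lambda pr_F$. This is the step I expect to be the main obstacle: one must argue carefully that $\ker(\Lambda pr_F)$, the ideal in $\mathcal O_{\Lambda E}$ generated by sections of $F'$, is invariant under every $\varphi\in G_E$. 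It is, because $\varphi$ is an algebra automorphism with $(\varphi-\mathrm{Id})$ raising the even part of the $\mathbb Z$-grading, so $\varphi$ maps a product containing an $F'$-factor to a sum of products each still containing at least one odd generator outside $\mathcal O_{\Lambda F}$ — but making this precise requires unwinding how $\varphi$ acts on generators $\mathcal O_{\Lambda^1 E}=\mathcal O_F\oplus\mathcal O_{F'}$ and checking the $F$-component picks up no spurious contribution. Once ideal-invariance is established, both the multiplicativity of $\alpha\mapsto\alpha_F$ and the nilpotency claim follow cleanly, and the cocycle condition for $\alpha_F$ is immediate from that for $\alpha$.

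For well-definedness of the induced map on cohomology, suppose $\alpha'=\varphi.\alpha$ for a $0$-cochain $\varphi=(\varphi_i)\in C^0(M,G_E)$, i.e. $\alpha'_{ij}=\varphi_i\,\alpha_{ij}\,\varphi_j^{-1}$. Applying the just-proved multiplicativity of $(\,\cdot\,)_F$ under composition, together with the fact that $(\varphi_i^{-1})_F=(\varphi_{i,F})^{-1}$ (again from invertibility in $G_F$ and the ideal-invariance giving $(\varphi_i)_F(\varphi_i^{-1})_F=(\mathrm{Id})_F=\mathrm{Id}$), we get $\alpha'_{F,ij}=\varphi_{i,F}\,\alpha_{F,ij}\,\varphi_{j,F}^{-1}$, so $\alpha'_F$ and $\alpha_F$ represent the same class in $H^1(M,G_F)$. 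Finally, $\alpha_F$ genuinely depends only on $\alpha$ and not on any choices (there are none beyond $\alpha$ itself), and the split cocycle $\alpha=\mathrm{Id}$ maps to $\alpha_F=\mathrm{Id}$, so the map is a well-defined map of pointed sets. I would close by remarking that this is exactly the cochain-level incarnation of restricting the superfunction sheaf $\mathcal O_{\mathcal M}$ along the closed embedding of the sub-supermanifold determined by $F$, which is why $\Lambda pr_F$ rather than an arbitrary linear retraction is the right map to use.
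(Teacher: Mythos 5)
Your reduction of everything to the multiplicativity statement $\Lambda pr_F\circ(\varphi\circ\psi)|_{\mathcal O_{\Lambda F}}=(\Lambda pr_F\circ\varphi|_{\mathcal O_{\Lambda F}})\circ(\Lambda pr_F\circ\psi|_{\mathcal O_{\Lambda F}})$ is a reasonable plan, but the fact you lean on to prove it --- that $\ker(\Lambda pr_F)$, the ideal generated by $\mathcal O_{F^\prime}$, is invariant under every $\varphi\in G_E$ --- is false. Take $rk(E)=4$, $F$ spanned locally by $\xi_1,\xi_2,\xi_3$, $F^\prime$ by $\xi_4$, and $\varphi=\exp(u)$ with $u=\xi_1\xi_2\xi_3\frac{\partial}{\partial\xi_4}\in Der_2(\Lambda E)$. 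Then $\varphi(\xi_4)=\xi_4+\xi_1\xi_2\xi_3$, whose second summand lies in $\mathcal O_{\Lambda^3 F}$ and not in the ideal generated by $\xi_4$; so $\varphi$ does not preserve $\ker(\Lambda pr_F)$, and the step ``its image stays in $\ker(\Lambda pr_F)$ and is killed by the outer $\Lambda pr_F$'' breaks down. A symptom of the problem is that your argument never uses the hypothesis $rk(F)\leq 3$, so if it worked it would prove the lemma for subbundles of arbitrary rank, contradicting the paper's remark immediately after the lemma that the statement fails in general for higher rank subbundles.

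What actually saves multiplicativity (and hence the cocycle and coboundary computations) is a degree count that uses $rk(F)\leq 3$: for $x\in\mathcal O_{\Lambda^j F}$ the term you must kill, $\Lambda pr_F\bigl(\varphi\bigl((\mathrm{Id}-\iota\circ\Lambda pr_F)(\psi(x))\bigr)\bigr)$, equals $\Lambda pr_F$ of an element of $\ker(\Lambda pr_F)$ plus corrections of $\mathbb Z$-degree $\geq j+4$, and after projection these land in $\mathcal O_{\Lambda^{\geq j+4}F}=0$ since $rk(F)\leq 3$. The paper makes this rank restriction do the work more directly: writing $\alpha_{ij}=\exp(\sum_k u_{2k,ij})$, every contribution raising degree by $4$ or more dies under $\Lambda pr_F\circ(\cdot)|_{\Lambda F}$, so $(\alpha_F)_{ij}=Id+\Lambda pr_F\circ u_{2,ij}|_{\Lambda F}$; moreover compositions of two degree-$2$-raising endomorphisms of $\mathcal O_{\Lambda F}$ vanish, so the non-abelian coboundary of $\alpha_F$ reduces to the abelian expression $\Lambda pr_F\circ(Id+u_{2,ij}+u_{2,jk}-u_{2,ik})|_{\Lambda F}$, which is trivial because $\alpha$ is a cocycle; the same truncation argument handles coboundaries. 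Your opening observations (that $\alpha_{F,ij}$ is a parity-preserving algebra endomorphism with the filtration-shift property, hence invertible) are fine, but the central multiplicativity step needs to be re-justified along these lines rather than via ideal invariance.
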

\begin{proof}
 Writing $\alpha_{ij}$ as the exponential of $\sum_{k=1}^\infty u_{2k,ij}$ with $u_{2k,ij} \in Der(M,Der_{2k}(\Lambda E))$ yields $(\alpha_F)_{ij}$ as the exponential of $\Lambda pr_F \circ u_{2,ij}|_{\Lambda F}$ since the $Der_4(\Lambda F)$-term vanishes. Hence $(\mathbf d(\alpha_F))_{ijk}= \Lambda pr_F\circ (Id+u_{2,ij}+u_{2,jk}-u_{2,ik} )|_{\Lambda F}=\Lambda pr_F\circ (\mathbf d(\alpha))_{ijk}|_{\Lambda F}=0$. In a similar way it is obtained that the map $\alpha \to \alpha_F$ maps coboundaries to coboundaries.
\end{proof}
\begin{rem}
(1) Note that $\alpha_F$ in general does not define the structure of a subsupermanifold.\\
 (2) It was used that the composition of endomorphisms, that increase the degree by $2$, is zero up to odd dimension $3$. The Lemma does in general not hold for higher rank subbundles.\\
 (3) In the special case\footnote{This case was pointed out to be of special interest in \cite{Ro}.} that the cocycle $\alpha$ can be chosen such that $\log(\alpha)\in Der^{(2\ell)}(\Lambda E)$, the Lemma follows for subbundles up to rank $4\ell-1$. 
\end{rem}

\noindent
Approaching odd dimension $4$ and $5$, from now on assume the case  $E=F \oplus F^\prime$ with $rk(F)\leq 3$ and  $rk(F^\prime)\leq 2$. This yields a decomposition 
$$\Lambda E=X\oplus Y \oplus Z, \quad \mbox{with } X=\Lambda F, \ Y=\Lambda F \otimes F^\prime, \ Z=\Lambda F \otimes \Lambda^2 F^\prime $$
Denote for $S,T \in  \{X,Y,Z\}$ by $Hom(T,S)$ the sheaf of homomorphisms of sheaves of complex vector spaces from $\mathcal O_T$ to $\mathcal O_S$ and set $End(T):=Hom(T,T)$ and $\tilde G_{T}:=\exp(End_2(T))$. For a cochain $\alpha \in C^1(M,G_E)$ regard the cochains
\begin{align}\label{ele} \begin{array}{l}
  \alpha_T:=(pr_{T}\circ\alpha|_{T}) \in C^1(M,\tilde G_{T}) \ \mbox{ for } \ T=X,Y,Z , \\
  u_{2,XY}+u_{4,XY}:=(pr_{Y} \circ \alpha|_{X}) \in C^1(M,Der_2(X,Y))\oplus C^1(M,Der_4(X,Y)) \ ,\\
  u_{2,XZ}+u_{4,XZ}:=(pr_{Z} \circ \alpha|_{X}) \in  C^1(M,Der_2(X,Z))\oplus C^1(M,Der_4(X,Z)) \ ,\\ 
  u_{2,YZ}+u_{4,YZ}:=(pr_{Z} \circ \alpha|_{Y}) \in  C^1(M,Hom_2(Y,Z))\oplus C^1(M,Hom_4(Y,Z)) \ ,\\
  u_{2,YX}:=(pr_{X} \circ \alpha|_{Y}) \in  C^1(M,Hom_2(Y,X)) \ ,\\
  u_{2,ZY}:=(pr_{Y} \circ \alpha|_{Z}) \in C^1(M,Hom_2(Z,Y)) \ . \end{array}
\end{align}
Note that  the  term $(pr_{X} \circ \alpha|_{Z})$ missing in the list, vanishes for reasons of degree. All eleven mentioned cochain complexes, those of the first line with respect to composition, the remaining with respect to the sum of maps,  are abelian. Continuing  all eleven cochains by zero on the complement of their domain of definition respectively, their sum equals $\alpha$. It follows from Proposition \ref{prop:02} and arguments similar to those in the proof of Lemma \ref{lem:123}:

\begin{prop}\label{prop:09}
Let $M$ be a complex manifold endowed with the sum of a rank $\leq 3$ vector bundle $F$ and a rank $\leq 2$ vector bundle $F^\prime$ denoted $E=F \oplus F^\prime$ with $H^0(M, Der_2(\Lambda E))=0$. Fix a strongly compatible map $D$  as in section \ref{sec:2} and decompose $D=D_{XY}+D_{XZ}+D_{YZ}$ with $D_{PQ}(\varphi,v_2,u_2):=pr_Q\circ D(\varphi,v_2,u_2)|_P$. The map of cochains  
\begin{align*}
  \alpha=\exp(u) \mapsto \Big(\alpha_T,\ u_{2,RS},\ D_{PQ}(Id,0,u_2)+u_{4,PQ} \ \Big| \ \begin{smallmatrix}{\scriptstyle R,S,T \in \{X,Y,Z\},\ R\neq S, \ (R,S)\neq(Z,X)}\\ (P,Q)\in \{(X,Y),(X,Z),(Y,Z)\}\end{smallmatrix} \Big)
\end{align*}
induces a map of cohomologies from $H^1(M,G_E)$ to the direct sum  ${\scriptstyle\bigoplus} H$ of the eleven abelian cohomologies of the cochain complexes in (\ref{ele}).  The induced map yields a bijection between $H^1(M,G_E)$ and the subset of  elements in ${\scriptstyle\bigoplus} H$  that can be represented by  cocycles  of the type $( \hat\alpha_T,\ \hat u_{2,RS},\ \hat u_{4,PQ})$ satisfying $\hat u_2:=\sum_T log(\hat \alpha_T)+\sum_{R,S} \hat u_{2,RS} \in Z^1(M,Der_2(\Lambda E))$ and $\hat u_4:=\sum_{R,S} \hat u_{4,RS}\in Z^1(M,Der_4(\Lambda E))$ as well as $c_{\hat u_2} \in B^2 (M,Der_4(\Lambda E))$.
\end{prop}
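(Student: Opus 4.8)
The plan is to present the stated assignment as the composition of the bijection $\sigma_D$ from Proposition \ref{prop:02} with the decomposition of the abelian cochain data for $Der_2(\Lambda E)$ and $Der_4(\Lambda E)$ along $\Lambda E=X\oplus Y\oplus Z$, so that well-definedness and injectivity become blockwise versions of the arguments already given for Lemma \ref{lem:123} and Proposition \ref{prop:02}, and so that the image can be read off from degree considerations. I would first record the consequences of $rk(F)\le 3$ and $rk(F^\prime)\le 2$: the $\mathbb Z$-degrees occurring in $X$, $Y$, $Z$ lie in $\{0,\dots,rk(F)\}$, $\{1,\dots,rk(F)+1\}$, $\{2,\dots,rk(F)+2\}$ respectively, so that an endomorphism raising degree by $4$ restricts trivially to each single block, one raising degree by $\ge 6$ vanishes, the component $pr_X\circ(\cdot)|_Z$ vanishes on all of $G_E$ (as already noted under (\ref{ele})), and $Der_4(\Lambda E)$ has only its $Der_4(X,Z)$- and $Hom_4(Y,Z)$-parts. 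In particular $End_2(T)\circ End_2(T)=0$, so $\tilde G_T$ is abelian with $\exp(a)\exp(b)=\exp(a+b)$, while the remaining eight complexes are abelian as modules under addition of maps.

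For well-definedness I would write $\alpha=\exp(u_2+u_4)$ where, by the remarks preceding Proposition \ref{prop:02} and by Lemma \ref{lem4}, $u_2\in Z^1(M,Der_2(\Lambda E))$ and $D(Id,0,u_2)+u_4\in Z^1(M,Der_4(\Lambda E))$. The degree bounds give $\alpha_T=\exp(pr_T\circ u_2|_T)$ together with the multiplicativity $pr_T\circ(\beta\gamma)|_T=(pr_T\circ\beta|_T)(pr_T\circ\gamma|_T)$ for $\beta,\gamma\in G_E$, since each cross-term $(pr_T\circ\beta|_S)(pr_S\circ\gamma|_T)$ with $S\neq T$ is either a degree-$\ge 4$ self-map of $T$ or factors through $pr_X\circ(\cdot)|_Z$, hence vanishes; applying the argument of Lemma \ref{lem:123} to the $T$-block then shows $\mathbf d\alpha=0\Rightarrow\mathbf d\alpha_T=0$ and that $\alpha\mapsto\alpha_T$ sends coboundaries to coboundaries. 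The remaining components $u_{2,RS}=pr_S\circ u_2|_R$, $u_{4,PQ}=pr_Q\circ u_4|_P$ and $D_{PQ}(Id,0,u_2)=pr_Q\circ D(Id,0,u_2)|_P$ are images of $u_2$, $u_4$, $D(Id,0,u_2)$ under the $\mathcal O_M$-linear block projections, and the \v{C}ech coboundary commutes with these; so all eleven components are cocycles, coboundaries go to coboundaries, and the induced map on cohomology factors through $\sigma_D$, hence is well defined.

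Injectivity I would get from the fact that a derivation $w\in Der_2(\Lambda E)$ is determined by $w|_{F\oplus F^\prime}\colon F\oplus F^\prime\to\Lambda^3E$: the $\Lambda^3F$-, $(\Lambda^2F\otimes F^\prime)$- and $(F\otimes\Lambda^2F^\prime)$-components of $w(F)$ are recovered $\mathcal O_M$-linearly from $\log\alpha_X$, $u_{2,XY}$, $u_{2,XZ}$, and those of $w(F^\prime)$ from $u_{2,YX}$, $\log\alpha_Y$, $u_{2,YZ}$ after subtracting the Leibniz contributions of the already-recovered pieces; moreover $D(Id,0,u_2)+u_4$ is recovered from the three degree-$4$ components, which are precisely its block projections. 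Thus equality of the eleven-tuples of classes forces $\sigma_D([\alpha])=\sigma_D([\alpha'])$, whence $[\alpha]=[\alpha']$ by Proposition \ref{prop:02}. For the image, one inclusion is immediate: for a tuple coming from $\alpha=\exp(u_2+u_4)$ one has $\sum_T\log\alpha_T+\sum_{R,S}u_{2,RS}=u_2\in\tilde Z^1$ (so $c_{u_2}\in B^2(M,Der_4(\Lambda E))$) and $\sum_{R,S}(D_{RS}(Id,0,u_2)+u_{4,RS})=D(Id,0,u_2)+u_4\in Z^1(M,Der_4(\Lambda E))$. Conversely, from a representing cocycle-tuple satisfying $\hat u_2\in Z^1(M,Der_2(\Lambda E))$, $\hat u_4\in Z^1(M,Der_4(\Lambda E))$ and $c_{\hat u_2}\in B^2(M,Der_4(\Lambda E))$, I would set $\hat\alpha:=\exp\!\big(\hat u_2+\hat u_4-D(Id,0,\hat u_2)\big)$; by (\ref{x1}), $d\,D(Id,0,\hat u_2)=c_{\hat u_2}$, so with $d\hat u_4=0$ the cocycle identity $c_{\hat u_2}=-d(\hat u_4-D(Id,0,\hat u_2))$ holds, giving $\hat\alpha\in Z^1(M,G_E)$ with $\sigma_D([\hat\alpha])=([\hat u_2],[\hat u_4])$, which maps back to the given tuple.

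The main obstacle I expect to be precisely this bookkeeping: pinning down which block components of $Der_2(\Lambda E)$ and $Der_4(\Lambda E)$ survive, keeping careful track of the redundancy of the eleven-component presentation against the non-redundant generator presentation of a derivation (for instance $u_{2,YX}$ and $u_{2,ZY}$ record the same datum $w(F^\prime)|_{\Lambda^3F}$, and $\alpha_Y$, $\alpha_Z$ partly duplicate $\alpha_X$), and thereby checking that the single requirement $\hat u_2\in Z^1(M,Der_2(\Lambda E))$ in the reassembly already encodes all of the compatibility constraints that cut out the image; this is the point at which the hypotheses $rk(F)\le 3$ and $rk(F^\prime)\le 2$ are genuinely used.
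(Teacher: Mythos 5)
Your treatment of well-definedness (blockwise projections commute with the \v{C}ech coboundary, cross terms vanish by degree, so the eleven components of a cocycle are cocycles and coboundaries go to coboundaries) and of the image (given a good representative, set $\hat\alpha=\exp(\hat u_2+\hat u_4-D(Id,0,\hat u_2))$ and use (\ref{x1}) to check the cocycle condition) is sound and is exactly the route the paper indicates, namely Proposition \ref{prop:02} combined with arguments as in Lemma \ref{lem:123}. The genuine gap is in your injectivity step. The fact that $u_2$, $u_4$ and $D(Id,0,u_2)$ are recovered from the eleven blocks is a statement about \emph{cochains}; equality of the eleven \emph{cohomology classes} only provides, separately for each block, a $0$-cochain with values in that block sheaf whose coboundary is the difference of the corresponding blocks. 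Several of these block sheaves ($\tilde G_Y$, $\tilde G_Z$, $Hom_2(Y,X)$, $Hom_2(Y,Z)$, $Hom_2(Z,Y)$, $End_2(T)$) consist of arbitrary $\mathbb C$-linear sheaf maps, so the eleven $0$-cochains need not assemble into a single $v_2+v_4\in C^0(M,Der^{(2)}(\Lambda E))$; hence ``equal eleven-tuples of classes $\Rightarrow\sigma_D([\alpha])=\sigma_D([\alpha'])$'' is a non sequitur, and it is precisely the content that has to be proved: that the blockwise gauge freedom, which is strictly larger in each block, does not identify classes that are inequivalent under the $C^0(M,G_E)$-action.

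That this is not a formality can be seen already in the $rk(F^\prime)=1$ situation: the kernel of $H^1(M,Hom_{\mathcal O_M}(L,\Lambda^3F))\to H^1(M,Hom_2(Y,X))$ can be nonzero, because $Hom_2(Y,X)$ admits first order differential operators as $0$-cochains. On $M=\mathbb P^1(\mathbb C)$ with $\deg(\Lambda^3F)-\deg(L)=-2$ and $\deg(L)\neq 0$ (for instance $\mathcal O_F=3\mathcal O(-4)$, $\mathcal O_L=\mathcal O(-10)$, which satisfies $H^0(M,Der_2(\Lambda E))=0$), the symbol sequence of first order operators $L\to\Lambda^3F$ has nonzero connecting homomorphism $H^0(M,\mathcal O)\to H^1(M,\mathcal O(-2))$, so a cocycle $\mu$ spanning $H^1(M,Hom_{\mathcal O_M}(L,\Lambda^3F))\cong\mathbb C$ becomes a coboundary of $\mathbb C$-linear $0$-cochains although $[\mu]\neq 0$ in $H^1(M,Der_2(\Lambda E))$; the split structure and $\exp(\mu)$ then have the same eleven classes in ${\scriptstyle\bigoplus}H$ while being distinct in $H^1(M,G_E)$. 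So your argument at this step would fail as written; the distinction between the coarse $\mathbb C$-linear block complexes and the derivation-type (or $\mathcal O_M$-linear) subcomplexes is exactly what Corollary \ref{cor:04} and Remark \ref{remx} take care of by replacing $Hom_2(Y,X)$ and $\tilde G_{\Lambda F\otimes L}$ by finer sheaves, and your closing paragraph locates the redundancy problem only in the characterization of the image, not in injectivity, so it remains unaddressed.
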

 
\begin{rem}\label{rem1} If $D$ also satisfies (\ref{eq:e}) then the inclusion $H^1(M,G_E) \hookrightarrow {\scriptstyle\bigoplus} H$ is by Proposition  \ref{prop:02} equivariant under the action of  $H^0(M,Aut(F)\times Aut(F^\prime))\subset H^0(M,Aut(E))$ acting diagonally on  ${\scriptstyle\bigoplus} H$. 
\end{rem}

\noindent 
For the case of $rk(F^\prime)=1$ denoting the line bundle $F^\prime$ by $L$, the result of Proposition \ref{prop:09} can be simplified. Most of the cochains in (\ref{ele}) vanish. The remaining are:
$$ \alpha_F:=\alpha_X, \quad \alpha_L:=\alpha_Y, \quad u_F:=u_{2,YX} \ \mbox{ and } \ u_L=u_{2,L}+u_{4,L}:=u_{2,XY}+u_{4,XY}$$
Note that for $f \in \mathcal O_M$, $s \in \mathcal O_L$ it is $u_F(fs)=pr_X((\alpha_F+u_{L})(f)s+f(\alpha_L+u_F)(s))=fu_F(s)$. Hence we can replace $H^1(M,Hom_2(Y,X))$ by $H^1(M,Hom_{\mathcal O_M}(L,\Lambda^3 F))$. Further we replace the cohomology $H^1(M,Der_4(X,Y))$ by $ H^1 (M,Der(\mathcal O_M)\otimes \mathcal O_{\Lambda^3 F \otimes L})$. 

\begin{cor}\label{cor:04}
For a complex manifold $M$ and the sum of a rank $\leq 3$ vector bundle $F$ and a line bundle $L$ denoted $E=F \oplus L$ such that $H^0(M, Der_2(\Lambda E))=0 $, fix a strongly compatible $D$. Then the elements $[\alpha]$ in the cohomology $H^1(M,G_E)$ correspond bijectively to the well defined  classes
\begin{align*}
 &([\alpha_F],[\alpha_L],[u_F],[u_{2,L}],[D(Id,0,u_2)+u_{4,L}]) \in  H^1(M,G_F) \oplus H^1(M,\tilde G_{\Lambda F\otimes L}) \\ &\quad  \oplus H^1(M,Hom_{\mathcal O_M}(L,\Lambda^3 F)) \oplus H^1(M,Der_2(\Lambda F,\Lambda F\otimes L) ) \oplus H^1 (M,Der(\mathcal O_M)\otimes \mathcal O_{\Lambda^3 F \otimes L})
 \end{align*} 
satisfying the two properties $u_2:=\alpha_F+\alpha_L+u_F+u_{2,L}-Id_{\Lambda E}\in Z^1(M,Der_2(\Lambda E))$ and  $c_{u_2} \in B^2(M,Der(\mathcal O_M)\otimes \mathcal O_{\Lambda^3 F \otimes L})$.
\end{cor}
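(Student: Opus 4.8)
The plan is to deduce Corollary~\ref{cor:04} from Proposition~\ref{prop:09} by specializing to $rk(F^\prime)=1$ and then rewriting the five surviving pieces of data. Since $F^\prime=L$ is a line bundle, $\Lambda^2 L=0$, so the summand $Z=\Lambda F\otimes\Lambda^2 F^\prime$ of $\Lambda E$ is zero and $\Lambda E=X\oplus Y$ with $X=\Lambda F$ and $Y=\Lambda F\otimes L$. Consequently six of the eleven cochains in~(\ref{ele}) — all those whose source or target is $Z$, namely $\alpha_Z$, $u_{2,XZ}+u_{4,XZ}$, $u_{2,YZ}+u_{4,YZ}$ and $u_{2,ZY}$ — vanish identically, and $D$ reduces to $D_{XY}$. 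Hence Proposition~\ref{prop:09} already provides a bijection between $H^1(M,G_E)$ and the subset of
\begin{align*}
 &H^1(M,\tilde G_X)\oplus H^1(M,\tilde G_Y)\oplus H^1(M,Hom_2(Y,X))\\
 &\qquad\oplus H^1(M,Der_2(X,Y))\oplus H^1(M,Der_4(X,Y))
\end{align*}
consisting of the classes representable by a cocycle $(\hat\alpha_X,\hat\alpha_Y,\hat u_{2,YX},\hat u_{2,XY},D(Id,0,\hat u_2)+\hat u_{4,XY})$ satisfying the cocycle conditions of that proposition, where now $\hat u_2=\log\hat\alpha_X+\log\hat\alpha_Y+\hat u_{2,YX}+\hat u_{2,XY}$ and $\hat u_4=\hat u_{4,XY}$.

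Next I would identify these five cohomology groups with the ones in the statement. For the first, observe $\alpha_X=pr_X\circ\alpha|_{\Lambda F}=\Lambda pr_F\circ\alpha|_{\Lambda F}=\alpha_F$, which lies in $Z^1(M,G_F)$ by Lemma~\ref{lem:123}; since conversely every element of $G_F$ occurs this way, $H^1(M,\tilde G_X)$ is replaced by $H^1(M,G_F)$ (while $\tilde G_Y=\tilde G_{\Lambda F\otimes L}$ and $Der_2(X,Y)=Der_2(\Lambda F,\Lambda F\otimes L)$ are mere renamings). For the $Hom_2(Y,X)$-factor I would use the degree count: a degree-$2$ homomorphism in $Hom_2(Y,X)$ sends the $\mathbb Z$-degree-$j$ part $\Lambda^{j-1}F\otimes L$ of $Y$ into the degree-$(j+2)$ part $\Lambda^{j+2}F$ of $X$, so for $rk(F)\leq 3$ only $j=1$ contributes and such a map amounts to an $\mathcal O_M$-linear homomorphism $L\to\Lambda^3 F$ — the $\mathcal O_M$-linearity being precisely the identity $u_F(fs)=fu_F(s)$ recorded before the corollary — whence $Hom_2(Y,X)$ may be replaced by $Hom_{\mathcal O_M}(L,\Lambda^3 F)$. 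Likewise a degree-$4$ derivation in $Der_4(X,Y)$ sends $\Lambda^j F$ into $\Lambda^{j+3}F\otimes L$, so again only $j=0$ survives and such a derivation is a derivation $\mathcal O_M\to\mathcal O_{\Lambda^3F\otimes L}$, i.e. an element of $Der(\mathcal O_M)\otimes\mathcal O_{\Lambda^3F\otimes L}$; this applies simultaneously to $\hat u_{4,XY}$ and to $D(Id,0,\hat u_2)$. All these replacements are $\mathcal O_M$-linear, commute with the \v{C}ech differentials, and therefore descend to cohomology, turning the direct sum above into the one in the statement.

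It then remains to match the cocycle conditions. Because $rk(F)\leq 3$, the square of any degree-$2$ endomorphism of $X$ or of $Y$ vanishes (the same phenomenon as Remark~\ref{rem:01}(2)), so the exponential on $\tilde G_X$ and $\tilde G_Y$ is just addition of the identity, giving $\log\hat\alpha_X=\hat\alpha_X-Id_{\Lambda F}$ and $\log\hat\alpha_Y=\hat\alpha_Y-Id_Y$. Hence $\hat u_2=\hat\alpha_F+\hat\alpha_L+\hat u_F+\hat u_{2,L}-Id_{\Lambda E}$, so the condition $\hat u_2\in Z^1(M,Der_2(\Lambda E))$ is the first displayed condition of the corollary, while $c_{\hat u_2}\in B^2(M,Der_4(\Lambda E))$ becomes, under the identification $Der_4(\Lambda E)=Der_4(X,Y)\cong Der(\mathcal O_M)\otimes\mathcal O_{\Lambda^3F\otimes L}$, the second one. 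Finally the separate requirement ``$\hat u_4\in Z^1$'' of Proposition~\ref{prop:09} need not be stated here: by the cocycle computation of Section~\ref{sec:2} one has $c_{\hat u_2}=-d\hat u_4$, while $d(D(Id,0,\hat u_2))=c_{\hat u_2}$ by~(\ref{x1}), so $D(Id,0,\hat u_2)+\hat u_{4,L}$ is automatically a cocycle, and conversely from any cocycle representing the last factor one recovers an admissible $\hat u_{4,L}$ with $d\hat u_{4,L}=-c_{\hat u_2}$. Well-definedness of the five classes independently of the chosen representative $\alpha$ is inherited from Proposition~\ref{prop:09}; assembling these observations yields the asserted bijection.

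The main obstacle I anticipate is the bookkeeping in the middle step: checking carefully that $rk(F)\leq 3$ really collapses $Hom_2(Y,X)$ and $Der_4(X,Y)$ to the single graded pieces $Hom_{\mathcal O_M}(L,\Lambda^3 F)$ and $Der(\mathcal O_M)\otimes\mathcal O_{\Lambda^3F\otimes L}$ with exactly the claimed module/derivation structure, and tracking precisely which of the cocycle conditions of Proposition~\ref{prop:09} survive — in particular explaining why the condition on $u_4$ is absorbed into $c_{u_2}\in B^2$. The remainder is a routine specialization of the earlier results.
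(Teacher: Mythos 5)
Your proposal is correct and follows essentially the same route as the paper, whose (implicit) proof is exactly the specialization of Proposition~\ref{prop:09} to $rk(F^\prime)=1$: the summand $Z=\Lambda F\otimes\Lambda^2L$ vanishes, six of the eleven cochains in (\ref{ele}) drop out, and the surviving pieces are rewritten via the $\mathcal O_M$-linearity of $u_F$ and the degree count identifying $Der_4(X,Y)$ with $Der(\mathcal O_M)\otimes\mathcal O_{\Lambda^3F\otimes L}$. Your extra remarks on absorbing the cocycle condition on $u_4$ into $c_{u_2}\in B^2$ and on replacing $H^1(M,\tilde G_X)$ by $H^1(M,G_F)$ via Lemma~\ref{lem:123} are consistent with the paper's setup.
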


\begin{rem}\label{remx}
The condition $u_2\in Z^1(M,Der_2(\Lambda E))$ contains that $\alpha_L$ is well-defined by $\alpha_F$, $u_F$ and $u_{2,L}$ and a term in $Z^1(M,Hom(L,\Lambda^2 F\otimes L))$. In particular  for $f \in \mathcal O_M$, $s \in \mathcal O_L$ it is $\alpha_L(fs)=pr_Y((\alpha_F+u_{L})(f)s+f(\alpha_L+u_F)(s))=\alpha_F(f)s+f\alpha_L(s)$.  So fixing $\alpha_F$, two possible choices of $\alpha_L$ differ by an element in $Z^1(M,Hom_{\mathcal O_M }(L,\Lambda^2 F\otimes L))$. This allows to regard the freedom in $H^1(M,\tilde G_{\Lambda F\otimes L})$ as a freedom in $H^1(M,Hom_{\mathcal O_M }(L,\Lambda^2 F\otimes L))$.
\end{rem}

\section{Examples on $\mathbb P^1(\mathbb C)$}
\noindent We discuss the orbit structure on $H^1(M,G_E)$ with respect to a maximal compact subgroup of $H^0(M,Aut(E))$ for the underlying manifold $M=\mathbb P_1(\mathbb C)$ for a large class of vector bundles. The example is studied here using Proposition \ref{prop:02} and Corollary \ref{cor:04}. We start with some general technical details and additional notation.

\bigskip\noindent Let $\mathcal O(k)$ for $k\in \mathbb Z$ denote the line bundle on $\mathbb P^1(\mathbb C)$ with divisor $k\cdot [0:1]$.\footnote{Note that the sign convention here is opposite to the convention used e.g. in \cite{Vi1}. } Fixing the coordinate chart $\mathbb P^1(\mathbb C)\backslash \{[1:0]\} \to \mathbb C$, $[z_0:z_1] \mapsto z:=\frac{z_0}{z_1}$ and denoting by $\mathbb C[z]_{\leq l}$ the polynomials of degree $\leq l$ and $\{0\}$ if $l<0$, we can identify the complex vector spaces 
$H^0(M,\mathcal O(k))\cong \mathbb C[z]_{\leq k}$ 
  and $H^1(M,\mathcal O(k))\cong \frac{1}{z}\mathbb C[\frac{1}{z}]_{\leq -k-2}$. 
Further note that $Hom_{\mathcal O_M}(\mathcal O(i),\mathcal O(j))\cong \mathcal O(j-i)$. Any complex vector bundle on $\mathbb P^1(\mathbb C)$ can be decomposed into a direct sum of line bundles (see \cite{Gro}) which are each isomorphic to one of the $\mathcal O(k)$. For a given vector bundle $E\to M$ fix such a decomposition $\mathcal O_E=\bigoplus_{i=1}^m \mathcal O(l_i)$ with $l_i\in \mathbb Z$. We fix $l_1\leq\cdots\leq l_m$. 
For later considerations we fix the standard local frame $\xi_i$ for the standard bundle chart of $\mathcal O(l_i)$ on $\mathbb P^1(\mathbb C)\backslash \{[1:0]\}$ and denote by $(\frac{\partial}{\partial \xi_i})_i$ the dual frame to the local frame $(\xi_i)_i$ of $E$. Elements in $Der(\Lambda E)$ hence locally appear as linear combinations of terms $\xi^If\frac{\partial}{\partial z},\xi^If\frac{\partial}{\partial \xi_i}$ with holomorphic numerical $f$ and $I \in \{0,1\}^m$. 

\bigskip \noindent
Set $m_i=\#\{l_j\ | \ l_j=i \}$ and note that $H^0(M,Hom(\mathcal O(i),\mathcal O(j)))\cong H^0(M,\mathcal O(j-i))$ realized in the above coordinates by multiplication of polynomials $\mathbb C[z]_{\leq j-i} \times \mathbb C[z]_{\leq i} \to \mathbb C[z]_{\leq j}$.
The global sections in $Aut(E)$ decompose into a semidirect product of groups $H^0(M,Aut(E))\cong A(E) \ltimes N(E)$ with:
\begin{align*}
A(E)&:=\textstyle{{\rm X}_{i=-\infty}^\infty} GL(m_i,\mathbb C)\cong\textstyle{{\rm X}_{i=-\infty}^\infty} H^0(M,Aut(\mathcal O(i)^{m_i})) \\
N(E)&:=Id_{\mathcal O_{\Lambda E}}+\textstyle{\bigoplus_{i<j}}\big(Hom(\mathbb C^{m_i},\mathbb C^{m_j})\otimes \mathbb C[z]_{\leq j-i}\big)\\&\ \cong Id_{\mathcal O_{\Lambda E}}+\textstyle{\bigoplus_{i<j}} H^0(M,Hom(\mathcal O(i)^{m_i},\mathcal O(j)^{m_j}))
\end{align*}
Set $U(E):=\textstyle{{\rm X}_{i=-\infty}^\infty} U(m_i)\subset A(E)$. 
Denote by $\rho$ and $\rho^\ast$ the standard, resp.~dual (here inverse transposed) action of the group $H^0(M,Aut(E))$, resp. of subgroups, on the vector space $\textstyle{\bigoplus_{i=-\infty}^\infty} \mathbb C^{m_i}$.

\bigskip\noindent
Note further  that  for fixed $2k$ a derivation in $Der_{2k}(\Lambda E)$ is given by its values on the sections in the subbundle $\Lambda^0E \oplus \Lambda^1 E\subset \Lambda E$. The continuation of a homomorphism on $\Lambda^1 E\subset \Lambda E$ by Leibniz rule and trivially on $\mathcal O_M$, respectively the restriction of a derivation to $\Lambda^0E$ yield an exact sequence (see \cite{BO})
$$0 \to Hom_{\mathcal O_M}(\Lambda^1 E,\Lambda^{2k+1} E) \to Der_{2k}(\Lambda E) \to  Der(\Lambda^0 E, \Lambda^{2k} E) \to 0 $$
with $Der(\Lambda^0 E, \Lambda^{2k} E)=Der(\mathcal O_M) \otimes \mathcal O_{\Lambda^{2k} E}$.  The long exact sequence of cohomology yields:
\begin{align*}
 &\ldots \to  H^0(M,Der(\Lambda^0 E, \Lambda^{2k} E)) \to H^1(M,Hom_{\mathcal O_M}(\Lambda^1 E,\Lambda^{2k+1} E)) \to H^1(M,Der_{2k}(\Lambda E))\\ &\qquad \qquad \to  H^1(M,Der(\Lambda^0 E, \Lambda^{2k} E)) \to  H^2(M,Hom_{\mathcal O_M}(\Lambda^1 E,\Lambda^{2k+1} E)) \to \ldots
\end{align*}
In any case $H^2(M,Hom_{\mathcal O_M}(\Lambda^1 E,\Lambda^{2k+1} E))=0$ for reasons of the dimension. In this chapter we specialize on the case 
$H^0(M,Der(\Lambda^0 E, \Lambda^{2k} E))=H^0(M,Der(\mathcal O_M) \otimes \mathcal O_{\Lambda^{2k} E})=0$. Due to $Der(\mathcal O_M)=\mathcal O(2)$ this means $l_{m-1}+l_m<-2$. All appearing sheaves are coherent sheaves on the compact complex manifold $M$ so we obtain an exact sequence of finite dimensional complex vector spaces: 
\begin{align*}
 0\to H^1(M,Hom_{\mathcal O_M}(\Lambda^1 E,\Lambda^{2k+1} E)) \to H^1(M,Der_{2k}(\Lambda E)) \to  H^1(M,Der(\Lambda^0 E, \Lambda^{2k} E)) \to  0
\end{align*}
Note that in the case $H^1(M,Hom_{\mathcal O_M}(\Lambda^1 E,\Lambda^{2k+1} E))= H^1(M,Der(\Lambda^0 E, \Lambda^{2k} E))=\{0\}$, triviality of $H^1(M,Der_{2k}(\Lambda E))$ follows.  
Furthermore the above sequence is equivariant under the respective $H^0(M,Aut(E))$-actions. There is a metric  invariant under the maximal compact Lie subgroup $U(E)$ of  $A(E)$, on the finite dimensional vector space $H^1(M,Der_{2k}(\Lambda E))$. The orthogonal complement to  $ H^1(M,Hom_{\mathcal O_M}(\Lambda^1 E,\Lambda^{2k+1} E)) \subset H^1(M,Der_{2k}(\Lambda E))$ yields a $U(E)$-equivariant splitting:
\begin{align}\label{eq:05}
 H^1(M,Der_{2k}(\Lambda E)) \cong H^1(M,Der(\Lambda^0 E, \Lambda^{2k} E))  \oplus H^1(M,Hom_{\mathcal O_M}(\Lambda^1 E,\Lambda^{2k+1} E)) 
\end{align}
We will use this splitting in the following. 

\bigskip\noindent
In order to apply Proposition \ref{prop:02}, we need $H^0(M,Der_2(\Lambda E))=0$. Note that the above long exact sequence of cohomology also yields exactness of:
\begin{align}\label{dfg}
 H^0(M,Hom_{\mathcal O_M}(\Lambda^1E,\Lambda^{2k+1}E))\to H^0(M,Der_{2k}(\Lambda E))\to H^0(M,Der(\Lambda^0 E,\Lambda^{2k}E))
\end{align}
Now for $k\geq 1$:
\begin{align}\label{dfg2}
 Hom_{\mathcal O_M}(\Lambda^1E,\Lambda^{2k+1}E)\cong \mathcal O_{\Lambda^{2k+1}E}\otimes \mathcal O_E^\ast \quad \mbox{ and }\quad
Der(\Lambda^0 E,\Lambda^{2k}E)\cong \mathcal O_{\Lambda^{2k}E}\otimes \mathcal O(2)
\end{align}
So we assume in the following: 
$$l_{m-1}+l_m<-2 \qquad \mbox{and}\qquad l_{m-2}+l_{m-1}+l_m-l_1<0$$ Then the global sections of the sheaves in (\ref{dfg2}) vanish and (\ref{dfg}) yields $H^0(M,Der_2(\Lambda E))=0$.

\subsection*{Odd dimension 3}
\noindent
Assume now that $m=3$.  It is $H^1(M,G_E)\cong H^1(M,Der_2(\Lambda E))$. Using (\ref{eq:05}) and further that $Hom_{\mathcal O_M}(\Lambda^1E,\Lambda^3 E)$ consists of multiplication operators in $\mathcal O_{\Lambda^2 E}$, it is:
\begin{align*} 
 H^1(M,Der_2(\Lambda E)) & \cong \textstyle{\bigoplus}_{1 \leq i<j \leq 3}\left(H^1(M,\mathcal O(l_i+l_j+2))\oplus H^1(M,\mathcal O(l_i+l_j))\right) 
\end{align*}
Hence it is:
\begin{align}\label{stern} 
\begin{array}{ll}
& H^1(M,Der_2(\Lambda E))\cong {\textstyle\bigoplus_{1 \leq i<j \leq 3}}\left( \frac{1}{z}\mathbb C[\frac{1}{z}]_{\leq c_{ij}}\oplus \frac{1}{z}\mathbb C[\frac{1}{z}]_{\leq d_{ij}}\right) \quad \mbox{with}\vspace*{0.2cm}\\
&c_{ij}:=-l_i-l_j-4 \quad \mbox{and} \quad d_{ij}:=-l_i-l_j-2  \ 
\end{array}
\end{align}
It follows that the group $U(E)$  acts  on $H^1(M,Der_2(\Lambda E))$ by multiples of restrictions of the actions $\rho\wedge\rho$, resp. $\rho \wedge \rho\wedge \rho \otimes \rho^\ast$. 
Denote for the description of the $U(E)$-orbits, $$\mathbb V_{k}^n:=\{ U(k)(v) \ | \ v \in (\textstyle{\frac{1}{z}\mathbb C[\frac{1}{z}]}_{\leq n})^k \}$$ for $0\leq k, n$ and  $\mathbb V^{n}_k=\{0\}$ else. 
Identify $\frac{1}{z}\mathbb C[\frac{1}{z}]_{\leq c_{ij}}\oplus \frac{1}{z}\mathbb C[\frac{1}{z}]_{\leq d_{ij}}\cong\frac{1}{z}\mathbb C[\frac{1}{z}]_{\leq c_{ij}+d_{ij}+1}$ via the homomorphism $(p,q)\mapsto p+z^{-c_{ij}-1}q$. 

\begin{prop}\label{prop:dings}
 The $U(E)$-action on $H^1(M,Der_2(\Lambda E))$ has with respect to (\ref{stern}) the orbits:
 \begin{itemize}
 \item[] $\mathbb V_3^{c_{12}+d_{12}+1}$ in the case $l_1=l_2=l_3$,
 \item[] $ \mathbb V_1^{c_{12}+d_{12}+1} \times \mathbb V_2^{c_{13}+d_{13}+1}$ in the case $l_1=l_2<l_3$,
  \item[] $\mathbb V_2^{c_{12}+d_{12}+1}\times \mathbb V_1^{c_{23}+d_{23}+1}$ in the case $l_1<l_2=l_3$,
 \item[] $ \mathbb V_1^{c_{12}+d_{12}+1}\times \mathbb V_1^{c_{13}+d_{13}+1}\times \mathbb V_1^{c_{23}+d_{23}+1}$ in the case $l_1<l_2<l_3$.
\end{itemize}
\end{prop}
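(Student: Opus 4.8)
\noindent The plan is to run through the four patterns of equalities among $l_1\leq l_2\leq l_3$ and, in each, to exhibit $H^1(M,Der_2(\Lambda E))$ as an explicit finite-dimensional representation of the compact group $U(E)$, from which the orbit set is read off. The input is the decomposition (\ref{stern}) together with the observation recorded just before the Proposition: on the summand indexed by a pair $\{i,j\}$, the group $U(E)$ acts through $\rho\wedge\rho$ on the piece $H^1(M,\mathcal O(l_i+l_j+2))\cong\frac{1}{z}\mathbb C[\frac{1}{z}]_{\leq c_{ij}}$ coming from $Der(\Lambda^0E,\Lambda^2E)$, through $\rho\wedge\rho\wedge\rho\otimes\rho^\ast$ on the piece $H^1(M,\mathcal O(l_i+l_j))\cong\frac{1}{z}\mathbb C[\frac{1}{z}]_{\leq d_{ij}}$ coming from $Hom_{\mathcal O_M}(\Lambda^1E,\Lambda^3E)$, and trivially on the ``polynomial in $\frac{1}{z}$'' factors in both cases. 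Only two elementary building blocks will occur: the defining action of a factor $U(m)$ (one of the unitary groups whose product is $U(E)$, indexed by a value occurring among the $l$'s with multiplicity $m$) on $m$-tuples in $\big(\frac{1}{z}\mathbb C[\frac{1}{z}]_{\leq N}\big)^m$, which by definition has orbit set $\mathbb V_m^N$; and a one-dimensional determinant character of $U(m)$ acting by scalars of modulus one on $\frac{1}{z}\mathbb C[\frac{1}{z}]_{\leq N}$, whose orbit set --- since $\det\colon U(m)\to U(1)$ is onto --- is the same as that of $U(1)$, namely $\mathbb V_1^N$.

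\medskip

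\noindent The first real step is to merge, for each pair $\{i,j\}$, the two pieces $H^1(M,\mathcal O(l_i+l_j+2))$ and $H^1(M,\mathcal O(l_i+l_j))$ into one. I claim they carry the \emph{same} $U(E)$-action on a common multiplicity module, so that the identification $(p,q)\mapsto p+z^{-c_{ij}-1}q$ recalled before the Proposition is $U(E)$-equivariant and collapses the contribution of the pair to a single copy of $\frac{1}{z}\mathbb C[\frac{1}{z}]_{\leq c_{ij}+d_{ij}+1}$ tensored with that module. This rests on the natural --- hence $U(E)$-equivariant --- isomorphism $\Lambda^2E\cong\det E\otimes E^\ast=Hom_{\mathcal O_M}(\Lambda^1E,\Lambda^3E)$, which holds because $rk(E)=3$, and which on each $GL$-block is the rank-$\leq2$ identity $\Lambda^2W\otimes W^\ast\cong W$; this is precisely where the equality of the $\rho\wedge\rho$- and $\rho\wedge\rho\wedge\rho\otimes\rho^\ast$-actions on the block is seen. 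The $Der(\mathcal O_M)=\mathcal O(2)$-twist distinguishing the two pieces only shifts the line bundle degree from $d_{ij}$ to $c_{ij}$, invisibly to $U(E)$, so the fusing map is just the obvious identification tensored with the identity on the multiplicity module.

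\medskip

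\noindent It remains to group the pairs $\{i,j\}$ according to the block decomposition of $U(E)$ and to collect the orbits. A pair both of whose indices lie inside one block $U(m)$ is ``internal''. For $m=2$ there is exactly one internal pair and its module is the one-dimensional determinant representation $\Lambda^2W$ of $U(2)$ ($\dim W=2$), contributing a factor $\mathbb V_1^{c_{ij}+d_{ij}+1}$. For $m=3$ the three internal pairs of $U(3)$ assemble (together with the shared factor $\frac{1}{z}\mathbb C[\frac{1}{z}]_{\leq c_{12}+d_{12}+1}$) into $\mathbb C^3\otimes\frac{1}{z}\mathbb C[\frac{1}{z}]_{\leq c_{12}+d_{12}+1}$ with $U(3)$ acting through $\Lambda^2(\mathrm{std})\cong\mathrm{std}^\ast\otimes\det$; one checks (using $g^T\bar g=Id$ for $g\in U(3)$) that its orbit set coincides with that of the standard action, i.e.\ with $\mathbb V_3^{c_{12}+d_{12}+1}$. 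A pair with indices in two distinct blocks is a ``cross'' pair; in rank $3$ one of the two blocks is necessarily $U(1)$ and contributes only a modulus-one scalar, so the $m$ cross pairs joining a block $U(m)$ to a block $U(1)$ --- all of them sharing the same twisting degree, hence the same $c_{ij}$ and $d_{ij}$ --- form $\big(\frac{1}{z}\mathbb C[\frac{1}{z}]_{\leq c_{ij}+d_{ij}+1}\big)^m$ on which $U(m)$ acts by its defining action, yielding a factor $\mathbb V_m^{c_{ij}+d_{ij}+1}$. Running through the four patterns --- $l_1=l_2=l_3$ (one $U(3)$-block, three internal pairs); $l_1=l_2<l_3$ (block $U(2)$ on $\{1,2\}$: one internal pair $\{1,2\}$, the cross pairs $\{1,3\},\{2,3\}$); $l_1<l_2=l_3$ (symmetrically, block $U(2)$ on $\{2,3\}$); $l_1<l_2<l_3$ (three $U(1)$-blocks, only cross pairs) --- and identifying the repeated indices $c_{ij}+d_{ij}+1$ within each group reproduces the four lines of the Proposition verbatim.

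\medskip

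\noindent The step I expect to be the genuine obstacle is the merging of the second paragraph: making the $GL$-natural isomorphisms $\Lambda^2E\cong\det E\otimes E^\ast$ and $\Lambda^2W\otimes W^\ast\cong W$ actually compatible with the unitary structure and with the block decomposition of $U(E)$, so that $(p,q)\mapsto p+z^{-c_{ij}-1}q$ is indeed $U(E)$-equivariant. Once that is in place the orbit computations are routine --- the defining action of $U(m)$ on $m$-tuples gives $\mathbb V_m^{\,\cdot}$ by definition, the facts that a surjective character $U(m)\to U(1)$ and the twist $\mathrm{std}^\ast\otimes\det$ of $U(3)$ leave orbit sets unchanged are elementary, and the remaining matching of indices is pure bookkeeping.
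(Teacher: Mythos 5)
Your proposal is correct and follows essentially the paper's own route: the paper likewise fuses each pair's two cohomology pieces into a single multiplicity space (the identification $(p,q)\mapsto p+z^{-c_{ij}-1}q$ together with the ``suitable basis''/matrix-of-minors $M_A$ playing the role of your natural isomorphism $\Lambda^2E\cong\det E\otimes E^\ast$) and then reads off the orbits case by case from the explicit $U(E)$-action, using $\{M_A \mid A\in U(3)\}=U(3)$ exactly as in your $A\mapsto\det(A)\bar A$ argument. The one point you file under ``bookkeeping'' --- that the orbit set of the simultaneous action is the \emph{product} over block-groups, which amounts to surjectivity of maps such as $(A,d)\mapsto(\det A,\ d\det(A)(A^{-1})^T)$ onto $U(1)\times U(2)$, resp.\ $(\lambda_1,\lambda_2,\lambda_3)\mapsto(\lambda_1\lambda_2,\lambda_1\lambda_3,\lambda_2\lambda_3)$ onto $(S^1)^3$ --- is left equally implicit in the paper's proof.
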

\begin{proof} 
\noindent\textit{Case  $l_1=l_2=l_3$:}
In this case it is $H^0(M,Aut(E))\cong A(E)= GL(3,\mathbb C)$. The action of $U(E)=U(3)$ on the vector space  $H^1(M,G_E)\cong (T({c_{12},d_{12}}))^3$ with $T({c_{12},d_{12}}):= \frac{1}{z}\mathbb C[\frac{1}{z}]_{\leq c_{12}}\oplus \frac{1}{z}\mathbb C[\frac{1}{z}]_{\leq d_{12}}$ is given in a suitable basis by $A \mapsto M_A \otimes Id_{T({c_{12},d_{12}})}$, where $M_A$ is the matrix of minors of $A$. Note further that it is  $\{M_A\ |\ A\in U(3)\}=U(3)$. \\
\noindent\textit{Case  $l_1=l_2<l_3$:}
In this case $U(E)=S^1 \times U(2)$. Its action on   $H^1(M,G_E)\cong  T({c_{12},d_{12}})\oplus (T ({c_{13},d_{13}}))^2 $
is  $$(A,d) \mapsto \left(\det(A) \cdot Id_{T({c_{12},d_{12}})}\right) \oplus \left(d\cdot det(A)\cdot (A^{-1})^T \otimes Id_{T({c_{13},d_{13}})}\right)$$ yielding orbits parametrized by $\mathbb V_1^{c_{12}+d_{12}+1} \times \mathbb V_2^{c_{13}+d_{13}+1} $. \textit{Case}  $l_1<l_2=l_3$ follows analogously.   \\
\noindent\textit{Case  $l_1<l_2<l_3$:}
In this case it is $U(E)= (S^1)^3$. Its action on the identified vector space  $H^1(M,G_E)\cong  T({c_{12},d_{12}})\oplus T({c_{13},d_{13}})\oplus T({c_{23},d_{23}})$
is given by $(\lambda_1,\lambda_2,\lambda_3) \mapsto \lambda_1\cdot \lambda_2 \cdot Id_{ T({c_{12},d_{12}})} \oplus \lambda_1\cdot \lambda_3 \cdot Id_{ T({c_{13},d_{13}})}\oplus \lambda_2\cdot \lambda_3 \cdot Id_{ T({c_{23},d_{23}})}$.
\end{proof}

 
\subsection*{Odd dimension 4}
In the case of $rk(E)=4$, the  $\mathcal O_M$-module $Hom_{\mathcal O_M}(\Lambda^1E,\Lambda^3 E)$ in (\ref{eq:05}) is generated  by  contractions in $\mathcal O_{E^\ast}$ followed by a multiplication operator in $\mathcal O_{\Lambda^3 E}$. Furthermore note that $H^1(M,Der_4(\Lambda E))\cong H^1(M,Der(\mathcal O_M)\otimes\mathcal O_{\Lambda^4 E})$.   It follows: 
 \begin{lem}\label{lem:test}
   The group $U(E)$  acts on $H^1(M,Der_2(\Lambda E))$ by multiples of restrictions of $\rho \wedge \rho$, resp. $\rho \wedge \rho\wedge \rho\otimes \rho^\ast$ according to the decomposition in (\ref{eq:05}). On $H^1(M,Der_4(\Lambda E))$ the group acts by the determinant.
 \end{lem}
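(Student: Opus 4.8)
The plan is to read off the $U(E)$-action directly from the $\mathcal{O}_M$-linear structure of the coefficient sheaves, using that the splitting (\ref{eq:05}) for $k=1$ is $U(E)$-equivariant. First I would recall that $U(E)\subset A(E)\subset H^0(M,Aut(E))$ consists of block-diagonal automorphisms that are $\mathcal{O}_M$-linear and preserve the $\mathbb{Z}$-grading of $\mathcal{O}_{\Lambda E}$. Hence for $\varphi\in U(E)$ and a cocycle $\exp(u_2+u_4)\in Z^1(M,G_E)$ one has $\varphi.\exp(u_2+u_4)=\exp(\varphi u_2\varphi^{-1}+\varphi u_4\varphi^{-1})$ with $\varphi u_{2k}\varphi^{-1}\in C^1(M,Der_{2k}(\Lambda E))$, so the conjugation action restricts on each abelian quotient $H^1(M,Der_{2k}(\Lambda E))$ to the functorial action induced by the coefficient transformation; since $\varphi$ acts $\mathcal{O}_M$-linearly, this is exactly the action coming from the representation of $U(E)$ on the relevant tensor bundle.

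Next I would treat $H^1(M,Der_2(\Lambda E))$ through the two summands of (\ref{eq:05}). For $H^1(M,Der(\Lambda^0E,\Lambda^2E))=H^1(M,Der(\mathcal{O}_M)\otimes\mathcal{O}_{\Lambda^2E})$, the factor $Der(\mathcal{O}_M)\cong\mathcal{O}(2)$ carries the trivial $U(E)$-action (the base is fixed), while $U(E)$ acts on $\mathcal{O}_{\Lambda^2E}$ by $\Lambda^2$ of the standard action, i.e.\ $\rho\wedge\rho$. Decomposing $\mathcal{O}_E=\bigoplus_a\mathcal{O}(l_a)\otimes\mathbb{C}^{m_a}$ into its isotypic line-bundle blocks and passing to $\Lambda^2$, one obtains $H^1(M,Der(\mathcal{O}_M)\otimes\mathcal{O}_{\Lambda^2E})$ as a direct sum of pieces of the form (irreducible constituent of $\rho\wedge\rho|_{U(E)}$)$\,\otimes\,$(multiplicity space $H^1(M,\mathcal{O}(l_i+l_j+2))$, on which $U(E)$ acts trivially); this is the meaning of ``multiples of restrictions of $\rho\wedge\rho$''. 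For $H^1(M,Hom_{\mathcal{O}_M}(\Lambda^1E,\Lambda^3E))=H^1(M,\mathcal{O}_{E^\ast}\otimes\mathcal{O}_{\Lambda^3E})$ the identical bookkeeping, now with $U(E)$ acting by $(\rho\wedge\rho\wedge\rho)\otimes\rho^\ast$, gives the second case. Assembling the two summands via the $U(E)$-equivariant splitting (\ref{eq:05}) proves the first assertion.

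For the claim on $H^1(M,Der_4(\Lambda E))$ I would use the identification $H^1(M,Der_4(\Lambda E))\cong H^1(M,Der(\mathcal{O}_M)\otimes\mathcal{O}_{\Lambda^4E})$ noted before the lemma, which in rank $4$ follows because $\Lambda^5E=0$, so $Hom_{\mathcal{O}_M}(\Lambda^1E,\Lambda^5E)=0$ and the exact sequence of the excerpt collapses. On $\mathcal{O}_{\Lambda^4E}=\mathcal{O}_{\det E}$ the group $A(E)$ acts $\mathcal{O}_M$-linearly through the determinant character of $\rho$, so $U(E)$ acts by $\det$; since $Der(\mathcal{O}_M)$ again carries the trivial action, the induced action on $H^1$ is multiplication by $\det$.

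The only delicate point, and hence the main obstacle, is the first paragraph's claim that the conjugation action of $U(E)$ on $H^1(M,G_E)$ restricts to the naive functorial action on the abelian subquotients $H^1(M,Der_{2k}(\Lambda E))$, with no correction by the quadratic terms $F(\cdot,\cdot)$ of (\ref{x3}) nor by the auxiliary map $D$. This uses crucially that the elements of $U(E)\subset A(E)$ are honest $\mathcal{O}_M$-linear, $\mathbb{Z}$-grading-preserving algebra automorphisms, so that $\exp$ intertwines conjugation in $G_E$ with the linear (coadjoint-type) action on $Der^{(2)}(\Lambda E)$ and the corrections, which arise only from the $C^0$-part of the coboundary, do not enter. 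The remainder is the routine decomposition of $\mathcal{O}_{\Lambda^2E}$, $\mathcal{O}_{E^\ast}\otimes\mathcal{O}_{\Lambda^3E}$ and $\mathcal{O}_{\det E}$ into line bundles on $\mathbb{P}^1(\mathbb{C})$ together with functoriality of \v{C}ech cohomology.
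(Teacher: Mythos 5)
Your argument is correct and follows essentially the same route as the paper, which deduces the lemma from the identifications $Hom_{\mathcal O_M}(\Lambda^1E,\Lambda^3E)\cong \mathcal O_{E^\ast}\otimes\mathcal O_{\Lambda^3E}$ (contractions followed by multiplications), $Der(\Lambda^0E,\Lambda^{2k}E)\cong Der(\mathcal O_M)\otimes\mathcal O_{\Lambda^{2k}E}$ with trivial action on $Der(\mathcal O_M)$, and $H^1(M,Der_4(\Lambda E))\cong H^1(M,Der(\mathcal O_M)\otimes\mathcal O_{\Lambda^4E})$, together with the $U(E)$-equivariant splitting (\ref{eq:05}). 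Your cautionary first paragraph is harmless but not needed: the lemma only concerns the linear conjugation action on the abelian cohomologies $H^1(M,Der_{2k}(\Lambda E))$ themselves, so the corrections $F(\cdot,\cdot)$ and the map $D$ (which enter only through $\sigma_D$ in Proposition \ref{prop:02}) play no role here.
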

 \noindent
Note that   $c_{u_2}=0$ for all $u_2 \in Z^1(M,Der_2(\Lambda E))$ by the covering chosen above. So we have $\tilde  Z^1(M,Der_2(\Lambda E))= Z^1(M,Der_2(\Lambda E))$ on $\mathbb P^1(\mathbb C)$. The canonical representatives $\chi$ of classes in $H^1(M,Der_2(\Lambda E))$ in the above coordinates are linear combinations of elements:
\begin{align*}
 &\textstyle{\frac{1}{z^{r+1}}}\xi_i\xi_j\frac{\partial}{\partial z} &&\mbox{ with } 0\leq r \leq -l_i-l_j-4  &&\mbox{ and }\\ &\textstyle{\frac{1}{z^{s+1}}}\xi_i\xi_j\xi_k\frac{\partial}{\partial \xi_t} &&\mbox{ with } 0\leq s \leq -l_i-l_j-l_k+l_t-2&&  
\end{align*}
Set $D(Id,0,\chi)=0$ for all of these canonical representatives. Then via (\ref{rep}), $D(Id,0,\cdot)$ can be defined on all of $Z^1(M,Der_2(\Lambda E))$. Now the map $D$  satisfies (\ref{eq:e}) for $\varphi\in U(E)$.  Continue $D$ to $U(E)\times C^0(M,Der_2(\Lambda E))\times Z^1(M,Der_2(\Lambda E))$ as in the proof of Lemma \ref{lemlem}. 

\bigskip \noindent
Proposition \ref{prop:02} yields an $U(E)$-equivariant bijection $\sigma_D: H^1(M,G_E)\to H^1(M,Der^{(2)}(\Lambda E))$ with the above $D$. We divide the general situation into three cases. 

\subsection*{Fourfold sum of a line bundle}
Assuming $\mathcal O_E=4\mathcal O(l)$ it is with respect to the decomposition in (\ref{eq:05}):
\begin{align*}
 &H^1(M,Der_2(\Lambda E))\cong H^1(M,\mathcal O(2l+2))^6 \oplus H^1(M,\mathcal O(2l))^{16}\\
 &H^1(M,Der_4(\Lambda E))=H^1(M,\mathcal O(4l+2)) \ 
\end{align*}
So  it follows from $H^0(M,Aut(E))\cong A(E)$, Proposition \ref{prop:02} and Lemma \ref{lem:test}:

\begin{thm}\label{thm:05}
The supermanifold structures associated with $E$ with $l<-1$ are parametrized by the $U(4)$-orbits of the diagonal action given by  $ \rho \wedge \rho$, $\rho \wedge \rho\wedge \rho\otimes \rho^\ast$ and the determinant on the three summands of the vector space:
$$ (\textstyle{\frac{1}{z}\mathbb C[\frac{1}{z}]}_{\leq-2l-4})^6 \ \oplus \ (\textstyle{\frac{1}{z}\mathbb C[\frac{1}{z}]}_{\leq-2l-2})^{16} \ \oplus \ \textstyle{\frac{1}{z}\mathbb C[\frac{1}{z}]}_{\leq-4l-4} $$
\end{thm}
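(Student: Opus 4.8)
The plan is to package together Proposition \ref{prop:02}, the exact sequence (\ref{eq:05}), the shape of line-bundle cohomology on $\mathbb{P}^1(\mathbb{C})$ recalled above and Lemma \ref{lem:test}; no new idea is needed. First I would check that everything is in force. With $m=4$ and $l_1=\cdots=l_4=l$ the two standing hypotheses $l_{m-1}+l_m<-2$ and $l_{m-2}+l_{m-1}+l_m-l_1<0$ read $2l<-2$ and $2l<0$, which hold precisely when $l<-1$, so $H^0(M,Der_2(\Lambda E))=0$. Since all summands of $\mathcal{O}_E$ are isomorphic we have $N(E)=Id_{\mathcal{O}_{\Lambda E}}$, hence $H^0(M,Aut(E))\cong A(E)=GL(4,\mathbb{C})$ with maximal compact subgroup $U(E)=U(4)$. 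On $\mathbb{P}^1(\mathbb{C})$ with the two-chart cover one has $c_{u_2}=0$ for every $u_2\in Z^1(M,Der_2(\Lambda E))$, so $\tilde Z^1=Z^1$ and its quotient by $B^1$ is $H^1(M,Der_2(\Lambda E))$; together with $Der^{(2)}(\Lambda E)=Der_2(\Lambda E)\oplus Der_4(\Lambda E)$ (valid as $rk(E)=4$), Proposition \ref{prop:02} applied to the strongly compatible $D$ constructed above, for which (\ref{eq:e}) holds for $\varphi\in U(E)$, yields a $U(4)$-equivariant bijection $\sigma_D\colon H^1(M,G_E)\to H^1(M,Der_2(\Lambda E))\oplus H^1(M,Der_4(\Lambda E))$. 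Since the supermanifold structures associated with $E$ are the $H^0(M,Aut(E))$-orbits on $H^1(M,G_E)$ (see \cite{Gr}), their $U(4)$-orbit structure is transported through $\sigma_D$.

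Next I would make the target explicit. For $E=4\mathcal{O}(l)$ one has $\Lambda^2E\cong 6\mathcal{O}(2l)$, $\Lambda^3E\cong 4\mathcal{O}(3l)$ and $\Lambda^4E\cong\mathcal{O}(4l)$, whence $Hom_{\mathcal{O}_M}(\Lambda^1E,\Lambda^3E)\cong E^\ast\otimes\Lambda^3E\cong 16\mathcal{O}(2l)$, $Der(\Lambda^0E,\Lambda^2E)=Der(\mathcal{O}_M)\otimes\mathcal{O}_{\Lambda^2E}\cong 6\mathcal{O}(2l+2)$ and $Der_4(\Lambda E)\cong Der(\mathcal{O}_M)\otimes\mathcal{O}_{\Lambda^4E}\cong\mathcal{O}(4l+2)$. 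Feeding this into the $U(E)$-equivariant splitting (\ref{eq:05}) gives $H^1(M,Der_2(\Lambda E))\cong H^1(M,\mathcal{O}(2l+2))^6\oplus H^1(M,\mathcal{O}(2l))^{16}$ and $H^1(M,Der_4(\Lambda E))\cong H^1(M,\mathcal{O}(4l+2))$, and applying $H^1(M,\mathcal{O}(k))\cong\frac{1}{z}\mathbb{C}[\frac{1}{z}]_{\leq -k-2}$ factor by factor produces exactly the three summands displayed in the statement.

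Finally I would read off the action: by Lemma \ref{lem:test} the group $U(4)$ acts on the two pieces of (\ref{eq:05}) through $\rho\wedge\rho$ on the $6$-dimensional multiplicity space $\Lambda^2\mathbb{C}^4$ and through $\rho\wedge\rho\wedge\rho\otimes\rho^\ast$ on the $16$-dimensional $\Lambda^3\mathbb{C}^4\otimes(\mathbb{C}^4)^\ast$, trivially on the polynomial coefficients, and on $H^1(M,Der_4(\Lambda E))$ through the determinant, which are the three diagonal actions on the three summands claimed; combining this with the $U(4)$-equivariant bijection $\sigma_D$ completes the proof. I do not expect a genuine obstacle; the only delicate point is bookkeeping, namely tracking the $U(E)$-action faithfully through (\ref{eq:05}) and the coordinate identification of $H^1(M,\mathcal{O}(k))$ so that the ``multiples of restrictions'' of Lemma \ref{lem:test} collapse to the clean representations $\rho\wedge\rho$, $\rho\wedge\rho\wedge\rho\otimes\rho^\ast$ and $\det$ with trivial action on the polynomial factors, and keeping the ranks $\binom{4}{2}=6$, $\binom{4}{3}\cdot 4=16$, $\binom{4}{4}=1$ straight.
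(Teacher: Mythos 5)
Your proposal is correct and follows essentially the same route as the paper: check that $l<-1$ gives the standing vanishing hypotheses and $H^0(M,Aut(E))\cong A(E)=GL(4,\mathbb C)$, use the vanishing of $c_{u_2}$ on the two-chart cover together with the strongly compatible $D$ satisfying (\ref{eq:e}) for $U(E)$ to get the $U(4)$-equivariant bijection of Proposition \ref{prop:02}, compute the summands via (\ref{eq:05}) and $H^1(M,\mathcal O(k))\cong\frac{1}{z}\mathbb C[\frac1z]_{\leq -k-2}$, and read off the action from Lemma \ref{lem:test}. The bookkeeping of ranks $6$, $16$, $1$ and of the degrees $-2l-4$, $-2l-2$, $-4l-4$ matches the paper exactly.
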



\subsection*{Two couples}
Assume now that $\mathcal O_E=2\mathcal O(l)\oplus 2\mathcal O(l^\prime)$ with $l<  l^\prime$. Then $A(E)= GL(2,\mathbb C)\times GL(2,\mathbb C)$ and $N(E)=Id_E+Hom(\mathbb C^{2},\mathbb C^{2})\otimes \mathbb C[z]_{\leq l^\prime-l}$ as lower-left  block matrices. Denote the standard and determinant action  of the factors of $A(E)$ by $\rho_{i}$, resp.~$det_i$, $i=1,2$. It is with respect to the decomposition in (\ref{eq:05}) -- each line one term:
\begin{align*}
 H^1(M,Der_2(\Lambda E))\cong&\quad\ H^1(M,\mathcal O(2l+2)) \oplus (H^1(M,\mathcal O(l+l^\prime+2)))^4 \oplus H^1(M,\mathcal O(2l^\prime+2)) \\& \oplus H^1(M,\mathcal O(2l))^4 \oplus (H^1(M,\mathcal O(l+l^\prime)))^8 \oplus H^1(M,\mathcal O(2l^\prime))^4\\ H^1(M,Der_4(\Lambda E))\cong& \quad\ H^1(M,\mathcal O(2(l+l^\prime)+2)) \ 
\end{align*}
  Proposition \ref{prop:02} and Lemma \ref{lem:test} yield:

\begin{prop}\label{prop:06}
Non-split supermanifold structures on $\mathbb P^1(\mathbb C)$ associated with a vector bundle of the form $\mathcal O_E=2\mathcal O(l)\oplus 2\mathcal O(l^\prime)$ with $l<l^\prime$ only appear if $l\leq -1$. Identifying in the case $l^\prime< -1$ the cohomology $H^1(M,G_E)$ with 
\begin{align*}\label{asd}
\begin{array}{rllll} & \quad \textstyle{\frac{1}{z}\mathbb C[\frac{1}{z}]}_{\leq-2l-4} &\oplus\quad    (\textstyle{\frac{1}{z}\mathbb C[\frac{1}{z}]}_{\leq-l-l^\prime-4})^4  &\oplus\quad   \textstyle{\frac{1}{z}\mathbb C[\frac{1}{z}]}_{\leq -2l^\prime-4}  &\vspace*{0.1cm}  \\   \oplus\  & (\textstyle{\frac{1}{z}\mathbb C[\frac{1}{z}]}_{\leq-2l-2})^4 &\oplus \quad  (\textstyle{\frac{1}{z}\mathbb C[\frac{1}{z}]}_{\leq-l-l^\prime-2})^8  &\oplus \quad   (\textstyle{\frac{1}{z}\mathbb C[\frac{1}{z}]}_{\leq-2l^\prime-2})^4 &\oplus   \quad  \textstyle{\frac{1}{z}\mathbb C[\frac{1}{z}]}_{\leq-2l-2l^\prime-4} \end{array}
\end{align*}
the diagonal action of $U(E)=U(2)\times U(2)$ is given by $det_1$, $\rho_{1}\otimes \rho_{2}$, $det_2$, $det_1\cdot \rho_2 \otimes \rho_2^\ast$, $(det_1\cdot \rho_2 \otimes \rho_1^\ast)\oplus (det_2\cdot \rho_1 \otimes \rho_2^\ast)$, $det_2\cdot \rho_1 \otimes \rho_1^\ast$ and $det_1 \cdot det_2$. 
\end{prop}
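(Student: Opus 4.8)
The plan is to proceed exactly as in the warm-up cases (Theorem \ref{thm:05} and the preceding odd-dimension-3 discussion), assembling three ingredients: a concrete cohomological description of the vector spaces $H^1(M,Der_2(\Lambda E))$ and $H^1(M,Der_4(\Lambda E))$ in the coordinates on $\mathbb P^1(\mathbb C)$; a verification that the hypotheses of Proposition \ref{prop:02} hold, so that $\sigma_D$ is an $H^0(M,Aut(E))$-equivariant bijection $H^1(M,G_E)\to H^1(M,Der^{(2)}(\Lambda E))$; and a computation of how the maximal compact subgroup $U(E)=U(2)\times U(2)$ of $A(E)$ acts on each summand via the representations listed in Lemma \ref{lem:test}. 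First I would record the necessary numerical hypotheses: $H^0(M,Der_2(\Lambda E))=0$ requires $l_{m-1}+l_m<-2$ and $l_{m-2}+l_{m-1}+l_m-l_1<0$, which for $\mathcal O_E=2\mathcal O(l)\oplus 2\mathcal O(l')$ with $l<l'$ becomes $l+l'<-2$ and $l'+2l'-l=3l'-l<0$; the stated standing assumption $l'<-1$ implies both (and, since any non-split structure needs a nonzero class in some $H^1$ of a line bundle of the form $\mathcal O(l_i+l_j+\varepsilon)$ with $\varepsilon\in\{0,2\}$, and $2\mathcal O(l')$ alone contributes only if $2l'+\varepsilon\le -2$, one sees non-splitness forces $l\le -1$). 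With these in place, Proposition \ref{prop:02} applies, and — as already noted in the odd-dimension-4 discussion — over $\mathbb P^1(\mathbb C)$ with the chosen two-set cover one has $c_{u_2}=0$, so $\tilde Z^1=Z^1$ and $\sigma_D$ is a genuine linear bijection onto $H^1(M,Der^{(2)}(\Lambda E))=H^1(M,Der_2(\Lambda E))\oplus H^1(M,Der_4(\Lambda E))$; the map $D$ built by setting $D(Id,0,\chi)=0$ on canonical representatives satisfies (\ref{eq:e}) for $\varphi\in U(E)$, giving $U(E)$-equivariance.

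Next I would unwind the splitting (\ref{eq:05}) for each $k$. For $k=1$, $Der_2(\Lambda E)$ fits in the exact sequence with $Der(\Lambda^0E,\Lambda^2E)=Der(\mathcal O_M)\otimes\mathcal O_{\Lambda^2E}$ and $Hom_{\mathcal O_M}(\Lambda^1E,\Lambda^3E)$; decomposing $\Lambda^2 E$ and $\Lambda^3 E$ into line bundles via $\mathcal O_E=2\mathcal O(l)\oplus 2\mathcal O(l')$ gives $\Lambda^2 E$ a piece $\mathcal O(2l)$, four copies of $\mathcal O(l+l')$, and a piece $\mathcal O(2l')$; tensoring with $Der(\mathcal O_M)=\mathcal O(2)$ shifts each by $+2$, and the $Hom$-term $\mathcal O_{\Lambda^3E}\otimes\mathcal O_{E^\ast}$ contributes, after grouping, four copies each of $\mathcal O(2l)$, $\mathcal O(2l')$ and eight of $\mathcal O(l+l')$ (matching the multiplicities $\binom{3}{1}$-type counts already displayed). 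Using $H^1(M,\mathcal O(k))\cong\frac1z\mathbb C[\frac1z]_{\le -k-2}$ then produces exactly the seven summands in the displayed identification, with the exponents $-2l-4$, $-l-l'-4$, $-2l'-4$ in the first row (the $Der(\mathcal O_M)\otimes\mathcal O_{\Lambda^2E}$ part) and $-2l-2$, $-l-l'-2$, $-2l'-2$ in the second row (the $Hom$ part), while $k=2$ gives $Der_4(\Lambda E)\cong Der(\mathcal O_M)\otimes\mathcal O_{\Lambda^4E}=\mathcal O(2)\otimes\mathcal O(2l+2l')$, whose $H^1$ is $\frac1z\mathbb C[\frac1z]_{\le-2l-2l'-4}$. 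Then I read off the $U(E)$-action summand by summand from Lemma \ref{lem:test}: the $\rho\wedge\rho$ action on $\Lambda^2 E=2\mathcal O(l)\oplus2\mathcal O(l')$ restricts to $\det$ on $\Lambda^2(2\mathcal O(l))$ (i.e.\ $det_1$), to $\rho_1\otimes\rho_2$ on the mixed part, and to $det_2$ on $\Lambda^2(2\mathcal O(l'))$; the $\rho\wedge\rho\wedge\rho\otimes\rho^\ast$ action on $\Lambda^3E\otimes E^\ast$ decomposes, by tracking which factor of $E$ is omitted from $\Lambda^3$ and which is dualized, into $det_1\cdot\rho_2\otimes\rho_2^\ast$, the pair $(det_1\cdot\rho_2\otimes\rho_1^\ast)\oplus(det_2\cdot\rho_1\otimes\rho_2^\ast)$, and $det_2\cdot\rho_1\otimes\rho_1^\ast$; finally the determinant action on $\Lambda^4E=\mathcal O(2l+2l')$ is $det_1\cdot det_2$. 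This is precisely the list in the statement.

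The genuinely routine part is the bookkeeping of multiplicities and the verification that the contraction-followed-by-multiplication description of $Hom_{\mathcal O_M}(\Lambda^1E,\Lambda^3E)$ transforms under $U(E)$ as claimed; the one place deserving care — and what I expect to be the main obstacle — is making the decomposition of the $\rho\wedge\rho\wedge\rho\otimes\rho^\ast$-module $\Lambda^3E\otimes E^\ast$ into $U(2)\times U(2)$-isotypic pieces fully explicit and matching it with the correct $H^1$-exponents, since $\Lambda^3(2\mathcal O(l)\oplus2\mathcal O(l'))$ mixes the two rank-$2$ blocks asymmetrically ($\Lambda^3$ of a rank-$2$ block vanishes, so every wedge uses at least one factor from each block) and one must be scrupulous that the $\mathcal O_M$-twist attached to each isotypic component is the one producing the stated polynomial degree; once that dictionary is written down, equivariance of the exact sequence (\ref{eq:05}) and of $\sigma_D$ finishes the proof immediately. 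I would also remark that the statement about non-split structures appearing only for $l\le -1$ follows by noting that if $l\ge 0$ then $l'\ge 1>-1$, contradicting the hypothesis, but more intrinsically: every $H^1$ summand above is of the form $\frac1z\mathbb C[\frac1z]_{\le N}$ with $N$ a linear expression in $l,l'$ that is non-positive precisely when the corresponding line bundle degree is $\le -2$, and the largest such $N$ among all summands is $-2l-2l'-4$ together with $-2l-2$; since $l<l'$, if $l\ge 0$ all these are negative and $H^1(M,G_E)=0$.
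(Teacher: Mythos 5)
Your proposal is correct and follows essentially the same route as the paper: the bijection $\sigma_D$ of Proposition \ref{prop:02} (with $c_{u_2}=0$ for the two-chart cover, so $\tilde Z^1=Z^1$), the splitting (\ref{eq:05}), the line-bundle decomposition of $\Lambda^2E$, $\Lambda^3E\otimes E^\ast$ and $\Lambda^4E$ with the multiplicities $1,4,1$ / $4,8,4$ / $1$, and the summand-by-summand reading of the $U(2)\times U(2)$-action from Lemma \ref{lem:test}. Two small corrections: the specialization of the standing condition $l_{m-2}+l_{m-1}+l_m-l_1<0$ is $2l'<0$, not $3l'-l<0$ (the latter does \emph{not} follow from $l'<-1$, e.g.\ $l=-10$, $l'=-2$, whereas the correct $2l'<0$ does, so $H^0(M,Der_2(\Lambda E))=0$ is still secured); and the first assertion ($l\le -1$) is not obtained by "contradicting the hypothesis $l'<-1$", since that hypothesis is only imposed for the identification -- your second, intrinsic argument (for $l\ge 0$ every relevant line bundle has degree $\ge -1$, so all the abelian $H^1$'s and hence $H^1(M,G_E)$ vanish) is the right one.
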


\subsection*{A distinct line bundle}
Decompose a rank $4$ vector bundle $E=F\oplus L$ with $\mathcal O_F=\mathcal O(l_1)\oplus \mathcal O(l_2)\oplus \mathcal O(l_3)$  ordered to $l_1 \leq l_2\leq l_3$ and $\mathcal O_L=\mathcal O(l)$ with $l\neq l_i$ for all $i=1,2,3$. Note that we can assume without loss of generality that $l < l_1$ or $l>l_3$. Following Corollary \ref{cor:04} the relevant cohomologies involved in a classification of supermanifold structures are $H^1(M,G_F)$ given analogue to the case of 3 odd dimensions in  (\ref{stern}), $ H^1(M,Hom_{\mathcal O_M}(L,\Lambda^3 F)) \cong H^1(M,\mathcal O(l_1+l_2+l_3-l)) \cong \textstyle{\frac{1}{z}\mathbb C[\frac{1}{z}]}_{\leq c} $ with 
$$c:=  -l_1-l_2-l_3+l-2 $$
 and $H^1(M,Der_2(\Lambda F,\Lambda F \otimes L))$, $H^1(M,Der_4(\Lambda F,\Lambda F \otimes L))$ and $H^1(M,\tilde G_{\Lambda F \otimes L})$. By Remark \ref{remx} the only remaining relevant term in the last cohomology is generated by $id_{\mathcal O_L}$ followed by a multiplication in $\mathcal O_{\Lambda^2 F}$ yielding $\bigoplus_{1 \leq i,j \leq 3} H^1(M,\mathcal O(l_i+l_j))\cong \bigoplus_{1 \leq i,j \leq 3} \textstyle{\frac{1}{z}\mathbb C[\frac{1}{z}]}_{\leq d_{ij}^\prime}$ with $$  d_{ij}^\prime:=-l_i-l_j-2. $$
 It is with respect to the decomposition in (\ref{eq:05}) in the case $l_3+l_4<-2$, resp. $l_4+l<-2$:
\begin{align*}
&H^1(M,Der_2(\Lambda F,\Lambda F \otimes L))\cong \quad\ \textstyle{\bigoplus}_{1 \leq i \leq 3} \Big( H^1(M,\mathcal O(l_i+l+2))\\&\qquad\qquad\qquad\qquad\qquad\oplus H^1(M,\mathcal O(l_i+l))^2\oplus H^1(M,\mathcal O(l_1+l_2+l_3+l-2l_i)) \Big)
\end{align*}
So $H^1(M,Der_2(\Lambda F,\Lambda F \otimes L))\cong \bigoplus_{1 \leq i \leq 3}\left( \textstyle{\frac{1}{z}\mathbb C[\frac{1}{z}]}_{\leq c_{i}}\oplus (\textstyle{\frac{1}{z}\mathbb C[\frac{1}{z}]}_{\leq d_{i}})^2\oplus \textstyle{\frac{1}{z}\mathbb C[\frac{1}{z}]}_{\leq d_{i}^\prime}\right)$ with:
\begin{align*}
&c_{i}:=-l_i-l-4, \qquad   d_{i}:=-l_i-l-2, \qquad d_{i}^\prime:=-l_1-l_2-l_3-l+2l_i-2
\end{align*}
Finally $H^1(M,Der_4(\Lambda F,\Lambda F \otimes L))  \cong H^1(M,\mathcal O(l_1+l_2+l_3+l+2)) \cong \textstyle{\frac{1}{z}\mathbb C[\frac{1}{z}]}_{\leq d}$ with
$$d:= -l_1-l_2-l_3-l-4 $$
It is $H^0(M,Aut(E))\cong(A(F)\times A(L))\ltimes (N(F)\times N^\prime(E))$  with $A(F)\subset GL(3,\mathbb C)$, $A(L)=\mathbb C^\times$, $N(F)$ as in the case of odd dimension three and $N^\prime(E)=pr_{E\to L}+ \textstyle{\bigoplus_{i=1}^3}\mathbb C[z]_{\leq |l_i-l|}$ as  upper-right, resp. lower-left block matrices  if $l<l_1\leq l_2\leq l_3$, resp. if $l_1\leq l_2\leq l_3<l$.

\bigskip\noindent
Corollary \ref{cor:04}, Proposition \ref{prop:dings} and Lemma \ref{lem:test} yield:

\begin{prop}\label{prop:04}
Non-split supermanifold structures of odd dimension $4$ on $\mathbb P^1(\mathbb C)$ associated with a vector bundle $\mathcal O_E=\mathcal O(l_1)\oplus \mathcal O(l_2)\oplus \mathcal O(l_3)\oplus \mathcal O(l)$  with $l<l_1\leq l_2\leq l_3$, resp. $l_1\leq l_2\leq l_3<l$ only appear if $l+l_1\leq -2$, resp. $l_1+l_2\leq -2$. Identifying in the case $l_3+l_4<-2$, resp. $l_4+l<-2$ the cohomology $H^1(M,G_E)$ with 
\begin{align*}
\begin{array}{rlll}
 & {\textstyle\bigoplus_{1\leq i,j\leq 3} } \textstyle{\frac{1}{z}\mathbb C[\frac{1}{z}]}_{\leq c_{ij}} & \oplus\quad {\textstyle\bigoplus_{1\leq i\leq 3} } \textstyle{\frac{1}{z}\mathbb C[\frac{1}{z}]}_{\leq c_{i}}  & \oplus\quad  {\textstyle\bigoplus_{1\leq i\leq 3} } \Big( (\textstyle{\frac{1}{z}\mathbb C[\frac{1}{z}]}_{d_{i}})^2  \oplus  \textstyle{\frac{1}{z}\mathbb C[\frac{1}{z}]}_{\leq d_{i}^\prime}\Big)  \\ \oplus\ &   {\textstyle\bigoplus_{1\leq i,j\leq 3} }\textstyle{\frac{1}{z}\mathbb C[\frac{1}{z}]}_{\leq d_{ij}}  & \oplus\quad  \textstyle{\frac{1}{z}\mathbb C[\frac{1}{z}]}_{\leq c} & \oplus\quad   {\textstyle\bigoplus_{1\leq i,j\leq 3} } \textstyle{\frac{1}{z}\mathbb C[\frac{1}{z}]}_{\leq d^\prime_{ij}} \quad \oplus\quad  \textstyle{\frac{1}{z}\mathbb C[\frac{1}{z}]}_{\leq d} 
\end{array}
\end{align*}
the diagonal action of $U(F)\subset U(3)$ is given by the restrictions of $\rho\wedge \rho$ on the first and sixth, of $\rho$ on the second, of $\rho\wedge \rho\otimes\rho^\ast$ on the third, of $\rho\wedge \rho\wedge \rho\otimes\rho^\ast$ on the fourth and by the determinant action on the fifth and seventh summand. The diagonal  action of $A(L)=S^1$ is trivial on the first, fourth and sixth, dual on the fifth and standard on the three remaining summands. 
\end{prop}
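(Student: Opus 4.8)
The strategy is to specialize the general decomposition of Corollary \ref{cor:04} to the concrete situation $M=\mathbb P^1(\mathbb C)$, $\mathcal O_E=\mathcal O(l_1)\oplus\mathcal O(l_2)\oplus\mathcal O(l_3)\oplus\mathcal O(l)$, using the cohomology computations carried out just above the statement together with the $U(E)$-equivariant splitting (\ref{eq:05}) and the vanishing $c_{u_2}=0$ for the standard covering of $\mathbb P^1(\mathbb C)$. First I would note that on this covering the constraint $c_{u_2}\in B^2(M,\cdots)$ from Corollary \ref{cor:04} is vacuous, and since $H^0(M,Der_2(\Lambda E))=0$ holds under the running hypotheses $l_{m-1}+l_m<-2$ and $l_{m-2}+l_{m-1}+l_m-l_1<0$, the bijection $\sigma_D$ of Proposition \ref{prop:02} applies with the explicit strongly compatible $D$ (satisfying (\ref{eq:e}) for $\varphi\in U(E)$) built in the ``Odd dimension 4'' subsection. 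Thus $H^1(M,G_E)$ is identified, as a $U(E)$-set, with the direct sum of the eleven abelian cohomologies of Corollary \ref{cor:04}, and it remains to (a) evaluate each summand as a space $\frac{1}{z}\mathbb C[\frac1z]_{\leq\ast}$ and (b) identify the induced $U(F)\times A(L)$-action on each.

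For step (a) I would simply collect the isomorphisms already recorded before the statement: $H^1(M,G_F)\cong H^1(M,Der_2(\Lambda F))$ is given by (\ref{stern}) with exponents $c_{ij},d_{ij}$; the term $H^1(M,Hom_{\mathcal O_M}(L,\Lambda^3F))\cong H^1(M,\mathcal O(l_1+l_2+l_3-l))\cong\frac1z\mathbb C[\frac1z]_{\leq c}$; the term $H^1(M,\tilde G_{\Lambda F\otimes L})$ reduces by Remark \ref{remx} to $\bigoplus_{1\leq i,j\leq3}H^1(M,\mathcal O(l_i+l_j))\cong\bigoplus\frac1z\mathbb C[\frac1z]_{\leq d'_{ij}}$; the term $H^1(M,Der_2(\Lambda F,\Lambda F\otimes L))$ splits via (\ref{eq:05}) into the displayed $\bigoplus_{i}(\frac1z\mathbb C[\frac1z]_{\leq c_i}\oplus(\frac1z\mathbb C[\frac1z]_{\leq d_i})^2\oplus\frac1z\mathbb C[\frac1z]_{\leq d'_i})$; and finally $H^1(M,Der_4(\Lambda F,\Lambda F\otimes L))\cong H^1(M,\mathcal O(l_1+l_2+l_3+l+2))\cong\frac1z\mathbb C[\frac1z]_{\leq d}$. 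Regrouping these into the seven clusters listed in the statement is a matter of bookkeeping, and the ``only appear if'' clause follows by checking when all exponents are $<0$: the highest-weight cohomology groups are those with $i=j=3$ (for the $\Lambda F$ part, forcing $l_2+l_3\geq -1$, i.e. effectively $l+l_1\leq -2$ in the case $l<l_1$, and $l_1+l_2\leq -2$ in the case $l_3<l$ after accounting for which weights survive), giving the stated triviality criterion.

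For step (b) the point is that all the identifications used are $H^0(M,Aut(E))$-equivariant, and under $H^0(M,Aut(F)\times Aut(F'))\subset H^0(M,Aut(E))$ — which by Remark \ref{rem1} preserves the decomposition — the action on each abelian summand is the one induced by the standard action $\rho$ of $U(F)\subset U(3)$ on $\bigoplus\mathbb C^{m_i}$ and by $A(L)=S^1$ by the scalar it contributes. Concretely: a factor $\Lambda^1F$ entering contributes $\rho$, a factor $\Lambda^2F$ contributes $\rho\wedge\rho$, a factor $\Lambda^3F$ contributes the determinant $\rho\wedge\rho\wedge\rho$, a dual copy of $F$ (coming from $\partial/\partial\xi_i$ or from $L^\ast$) contributes $\rho^\ast$, and $L$, $L^\ast$ contribute the trivial, resp. dual $S^1$-weight, while $L\otimes L^\ast$ contributes the trivial weight. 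Matching this against the eleven cochain spaces in (\ref{ele}): $\alpha_X=\alpha_F$ carries the $Der_2(\Lambda F)$-action ($\rho\wedge\rho$ on the first cluster, $\det$ on the sixth), $u_{2,YX}$ lives in $Hom_{\mathcal O_M}(L,\Lambda^3F)$ giving $\det\otimes(L$-dual$)$ on the fifth cluster, the $Der_2(\Lambda F,\Lambda F\otimes L)$ part splits into $\rho$ (second cluster), $\rho\wedge\rho\otimes\rho^\ast$ (from a $\Lambda^2F$ times the $\partial/\partial\xi_i$ dual, the third cluster), the $Der_4$ parts give $\rho\wedge\rho\wedge\rho\otimes\rho^\ast$ (fourth) and $\det$ (seventh), and the reduced $\tilde G_{\Lambda F\otimes L}$ part gives $\rho\wedge\rho$ with trivial $L$-action (placed in the first cluster's companion). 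The $A(L)=S^1$ weights are read off identically: trivial wherever $L$ appears ``straight'' (clusters one, four, six), dual where $L^\ast$ appears (cluster five), and standard on the three clusters where a single $L$ appears against $\Lambda F$. Combining with Lemma \ref{lem:test} and Proposition \ref{prop:dings} (which identify exactly these $U(F)$-representation types on the $\Lambda F$ and $\Lambda F\otimes L$ pieces) completes the identification.

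The main obstacle is purely organizational rather than conceptual: keeping consistent track of eleven cochain spaces, the splitting (\ref{eq:05}) applied inside several of them, and the precise weights of each under both $U(F)$ and $A(L)$ — in particular being careful about which occurrences of $F$ are ``straight'' versus ``dualized'' (the $\partial/\partial\xi_i$ always contributes $\rho^\ast$) and about the shift in the identification $\frac1z\mathbb C[\frac1z]_{\leq c_{ij}}\oplus\frac1z\mathbb C[\frac1z]_{\leq d_{ij}}\cong\frac1z\mathbb C[\frac1z]_{\leq c_{ij}+d_{ij}+1}$ used in Proposition \ref{prop:dings}. There is no genuine difficulty once Corollary \ref{cor:04} and the cohomology computations preceding the statement are in hand; the proof is essentially ``assemble the pieces and read off the weights.''
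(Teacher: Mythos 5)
Your plan follows the paper's own route exactly: the paper offers no separate argument for Proposition \ref{prop:04} beyond ``Corollary \ref{cor:04}, Proposition \ref{prop:dings} and Lemma \ref{lem:test} yield'', applied to the cohomology computations and the splitting (\ref{eq:05}) displayed immediately before the statement, with the explicit strongly compatible $D$ (and $c_{u_2}=0$ for the standard covering) from the odd-dimension-4 discussion; that is precisely your steps (a) and (b), so in method the proposal is correct.

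Since the entire content of this proof is the bookkeeping you defer, let me flag where your ledger is miswired. The two clusters coming from $H^1(M,G_F)\cong H^1(M,Der_2(\Lambda F))$ via (\ref{stern}) are the first (exponents $c_{ij}$, the $\xi_i\xi_j\partial_z$ part, weight $\rho\wedge\rho$, trivial $A(L)$-weight) and the fourth (exponents $d_{ij}$, the $Hom_{\mathcal O_M}(\Lambda^1F,\Lambda^3F)$ part, weight $\rho\wedge\rho\wedge\rho\otimes\rho^\ast$, trivial $A(L)$-weight because no factor of $L$ occurs at all, not because ``$L$ appears straight''); your attribution of the fourth cluster to ``the $Der_4$ parts'' is wrong, as the only $Der_4$ contribution here is $Der_4(\Lambda F,\Lambda F\otimes L)$, which is the seventh cluster. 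Likewise the sixth cluster (exponents $d'_{ij}$) is the $Hom_{\mathcal O_M}(L,\Lambda^2F\otimes L)$ reduction of $\tilde G_{\Lambda F\otimes L}$ from Remark \ref{remx} and carries $\rho\wedge\rho$ with trivial $A(L)$-weight; your parenthetical ``$\det$ on the sixth'' attached to $\alpha_F$ contradicts both the statement and your own later sentence, since the determinant occurs only on the fifth and seventh clusters. Finally, the necessity clause should not be argued via ``$i=j=3$'' or ``$l_2+l_3\geq-1$'': the point is that the nonvanishing of some summand $\frac{1}{z}\mathbb C[\frac{1}{z}]_{\leq\ast}$ is governed by the most negative twist occurring, and under the section's standing negativity hypotheses this reduces to the sum of the two smallest degrees, namely $l+l_1$ when $l<l_1$ and $l_1+l_2$ when $l_3<l$, giving the stated conditions $l+l_1\leq-2$, resp.\ $l_1+l_2\leq-2$. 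With these assignments corrected the proof goes through as you outline.
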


\subsubsection*{Acknowledgments.} The author thanks an anonymous referee for pointing out several mistakes.

\bibliographystyle{9} 
\addcontentsline{toc}{section}{References}

\end{document}